\newcommand{\myitem}[1]{%
\item[#1]\protected@edef\@currentlabel{#1}%
}
\def\ps@pprintTitle{%
 \let\@oddhead\@empty
 \let\@evenhead\@empty
 \def\@oddfoot{}%
 \let\@evenfoot\@oddfoot}
\chardef\bslash=`\\ 
\newtheorem{theorem}{Theorem}[section]
\theoremstyle{definition}
\newtheorem{definition}{Definition}[section]
\newtheorem{remark}{Remark}
\newtheorem{example}{Example}
\journal{}
\begin{document}
\begin{frontmatter}

\title{A stability-enhanced nonstandard finite difference framework for solving one and two-dimensional nonlocal differential equations}
\author{Shweta Kumari$^{a}$}
\ead{shwetakri8062@gmail.com}
\author{Mani Mehra$^{a*}$}
\ead{mmehra@maths.iitd.ac.in}
\address{$^{a}$Department of Mathematics, Indian Institute of Technology Delhi\\
New Delhi-110016, India}
\cortext[cor1]{Corresponding author}

\begin{abstract}
Standard finite difference (SFD) schemes often suffer from limited stability regions, especially when applied in an explicit setup to partial differential equations. To address this challenge, this study investigates the efficacy of nonstandard finite difference (NSFD) schemes in enhancing the stability of explicit SFD schemes for 1D and 2D Caputo-type time-fractional diffusion equations (TFDEs). A nonstandard $L1$ approximation is proposed for the Caputo fractional derivative, and its local truncation error is derived analytically. This nonstandard $L1$ formulation is used to construct the NSFD scheme for a Caputo-type time-fractional initial value problem. The absolute stability of the resulting scheme is rigorously examined using the boundary locus method, and its performance is validated through numerical simulations on test examples for various choices of denominator functions. Based on this framework, two explicit NSFD schemes are developed for the 1D and 2D cases of the Caputo-type TFDE. Their stability is further assessed through the discrete energy method, with particular focus on the expansion of the stability region relative to SFD schemes. The convergence of the proposed NSFD schemes is also established. Finally, a comprehensive set of numerical experiments is conducted to demonstrate the accuracy and stability advantages of the proposed methods, with results presented through tables and graphical illustrations.
\end{abstract}
\begin{keyword}
Nonstandard finite difference schemes, Caputo derivative, time-fractional diffusion equation, stability, convergence.
\end{keyword}

\end{frontmatter}
\numberwithin{equation}{section}

\section{Introduction}
Time-fractional differential equations have emerged as powerful mathematical models for capturing memory-dependent dynamics in complex systems \cite{podlubny1998fractional,samko1993fractional,oldham1974fractional}. These incorporate nonlocal temporal operators, allowing the current state of the system to depend on its entire evolution history, rather than just on the present configuration. This makes them particularly suitable for describing anomalous transport, hereditary materials, and relaxation processes \cite{kilbas2006theory,diethelm2010analysis,kumari2024numerical}. Among them, time-fractional diffusion equations (TFDEs) have garnered special attention due to their capacity to model subdiffusive behavior frequently observed in heterogeneous media such as porous rocks, biological tissues, and crowded cellular environments \cite{magin2010fractional,mehandiratta2020difference,mehandiratta2023well}. Applications span diverse domains, including viscoelasticity, heat and mass transport, electromagnetics, chaotic systems, and biological processes \cite{shah2019new,engheta2002fractional,kumari2025finite}. In particular, Caputo-type TFDEs are widely employed to model physical processes with temporal memory, such as turbulent flows \cite{carreras2001anomalous}, viscoelastic materials \cite{koeller1984applications}, and biological dynamics \cite{magin2010fractional}.\par
Despite their modeling advantages, TFDEs pose significant computational challenges due to the nonlocality of the fractional derivative. The Caputo derivative introduces a memory kernel that couples all previous time steps, leading to increased computational cost and storage demands. Hence, analytical solutions to such models are often difficult to obtain, and a broad spectrum of numerical methods has been developed to approximate their solutions accurately \cite{singh2025non}. These methods must contend with the reduced regularity of the solutions and the inherent nonlocality of the fractional operator \cite{jin2023numerical,kumari2024high}.
Moreover, SFD schemes, being one of the popular numerical methods for solving differential equations, often suffer from severe restrictions on the time step due to stability limitations, especially in an explicit setup \cite{kanwal2024explicit,chen2013superlinearly}. Explicit schemes are particularly attractive due to their simplicity and parallelizability, but their usefulness is often hindered by restrictive stability constraints. Additionally, high-order accuracy may require fine grids, making SFD schemes computationally expensive and less flexible for problems with complex geometry or nonlocal behaviour. To efficiently simulate long-time dynamics or large-scale systems modeled by TFDEs, it becomes imperative to design numerical schemes that offer improved stability properties. Therefore, the development of stability-enhanced numerical schemes is essential to unlock the potential of explicit formulations for solving time-fractional models.\par
NSFD methods, introduced by Mickens \cite{mickens1993nonstandard,mickens2002nonstandard}, provide an alternative approach for constructing discrete models of continuous systems. These schemes modify the standard discretization process, often through the use of nonlocal discretes for nonlinear terms or nonstandard denominator functions (DFs), to ensure consistency with the underlying dynamics.
One of the objectives of these constructions is to overcome the numerical instabilities that exist in the formulation of SFDs.  A key advantage of NSFD schemes is their ability to maintain essential structural properties of the model, such as positivity, boundedness, etc, independent of step sizes. Previous studies have demonstrated that NSFD schemes often yield better accuracy and qualitative behavior than standard methods \cite{mickens2004positivity}.
While the application of NSFD schemes to classical partial differential equations (PDEs) is well-established \cite{kayenat2024some,mickens2020note}, their use in fractional-order models 
remains relatively underdeveloped. Article \cite{moaddy2011non} proposing a nonstandard finite difference scheme for linear fractional PDEs in fluid mechanics with Riemann-Liouville derivative used nonstandard approximations for classical derivatives only.
Existing studies have proposed NSFD schemes for fractional PDEs involving Caputo derivative \cite{agarwal2018non,taghipour2022efficient,dwivedi2019numerical,anileyresults}, but they used nonstandard approximations only for the integer-order derivatives and rely on standard techniques such as the Crank–Nicolson method or spectral collocation for the Caputo fractional terms. 
Notably, a direct NSFD approximation of the Caputo fractional derivative itself is still lacking in the literature.\par
We fill this gap by considering the following initial-boundary value problem (IBVP) for one and two-dimensional Caputo-type TFDEs as
\begin{align}
\label{diff_eqn}
{}^{\text{C}}D_{0,t}^{\alpha}u(\mathbf{x},t) &= \mathcal{L}u(\mathbf{x},t) + f(\mathbf{x},t), \quad (\mathbf{x},t) \in \Omega \times (0,T], \\
\label{ini_eqn}
u(\mathbf{x},0) &= u^0(\mathbf{x}), \quad \mathbf{x} \in \Omega, \\
\label{bdry_eqn}
u(\mathbf{x},t) &= 0, \quad (\mathbf{x},t) \in \partial \Omega \times [0,T],
\end{align}
where $\Omega \subset \mathbb{R}^d$ is the spatial domain with $d = 1$ or $2$. The spatial differential operator $\mathcal{L}$ is defined as
\begin{align*}
\text{for } d=1: & \quad \mathcal{L}u(x,t) = \frac{\partial^2 u}{\partial x^2}(x,t), \quad \Omega = [0,L]; \\
\text{for } d=2: & \quad \mathcal{L}u(x,y,t) = \frac{\partial^2 u}{\partial x^2}(x,y,t) + \frac{\partial^2 u}{\partial y^2}(x,y,t), \quad \Omega = [0,L_x] \times [0,L_y].
\end{align*}
Here, ${}^{\text{C}}D_{0,t}^{\alpha}$ is the Caputo fractional operator and the equations \eqref{ini_eqn} and \eqref{bdry_eqn} are initial and Dirichlet boundary conditions. In this work, we construct a nonstandard $L1$ ($NSL1$) approximation for the Caputo fractional derivative of fractional order $\alpha \in (0,1)$ and validate it through numerical experiments on time-fractional initial value problems (IVPs). We also study the absolute stability of this $NSL1$ formulation via the boundary locus method and conclude it for the standard one. We then apply this approximation to develop fully explicit NSFD schemes for one and two-dimensional Caputo-type TFDEs \eqref{diff_eqn}. The explicit nature of these methods renders them computationally attractive for large-scale simulations and parallel implementations. The primary contribution of this study lies in the rigorous absolute stability analysis of the proposed $NSL1$ scheme, and the stability analysis of the explicit NSFD schemes aimed at mitigating the numerical instabilities typically associated with standard explicit methods.\par
The remainder of this article is organized as follows. Section \ref{sec2} introduces the necessary preliminaries and definitions required as the basis of this work. In Section \ref{sec3}, a nonstandard $L1$ approximation to the Caputo fractional derivative is proposed, and its local truncation error is derived. In Section \ref{sec4}, we analyze the absolute stability of the $NSL1$ scheme using the boundary locus method, supported by graphical illustrations. Using this result, we also conclude the absolute stability of the standard $L1$ scheme. In the same section, a set of numerical experiments is performed to confirm the accuracy and efficiency of the $NSL1$ method. Section \ref{sec5} presents the development of explicit NSFD schemes for the 1D and 2D TFDEs based on this $NSL1$ approximation, along with a thorough stability and convergence analysis. Numerical results are reported in Section \ref{sec6} to validate the theoretical findings. Finally, Section \ref{sec7} summarizes the main conclusions and outlines potential directions for future work.
\section{Preliminaries}\label{sec2}
This segment contains the fundamentals required for a basic understanding of the topic and building a foundation for the research carried out in this work.
\begin{definition}
\textbf{Caputo Derivative} \cite{li2015numerical}.\\
The left Caputo fractional derivative of order $\alpha>0$ of the function $y(t)$, $y\in C^m(a,b)$, is defined as
\begin{equation*}
_{C}D_{a,t}^{\alpha}y(t) = \frac{1}{\Gamma(m-\alpha)}\int_{a}^{t}(t-\zeta)^{m-\alpha-1}y^{(m)}(\zeta)d\zeta,
\end{equation*}
where $m-1<\alpha \leq m$, $m \in \mathbb{N}$. The Caputo derivative for a particular value of $m=1$ so that $0<\alpha<1$ is given as
\begin{equation}\label{caputo}
_{C}D_{a,t}^{\alpha}y(t) = \frac{1}{\Gamma(1-\alpha)}\int_{a}^{t}(t-\zeta)^{-\alpha}y^\prime(\zeta)d\zeta.
\end{equation}
\end{definition}
\begin{definition}\textbf{Linear Interpolation} \cite{qiao2022fast}.\\
The linear Lagrange interpolating polynomial of a function $y(t)$ on some interval $[t_{k-1},t_k],\ k\geq 1,$ is defined as
\begin{equation}\label{lin_int}
I_sy(t) = \frac{t-t_k}{t_{k-1}-t_k}y(t_{k-1})+\frac{t-t_{k-1}}{t_k-t_{k-1}}y(t_k).
\end{equation}
Thus, we have the interpolation error as
\begin{equation}\label{int_error}
T_s(t)=y(t)-I_{s}y(t)=y''(\xi_k)(t-t_{k-1})(t-t_{k}),\quad \xi_k \in (t_{k-1},t_k).
\end{equation}
\end{definition}
\begin{definition} \textbf{$L1$ method} \cite{lin2007finite}.\\
The $L1$ method is the most popular approximation for the Caputo fractional derivative, defined as
\begin{align}\label{L1}
_{C}D_{0,t_j}^{\alpha}y(t)=&\frac{1}{\Gamma(1-\alpha)}\sum_{k=0}^{j-1}\frac{y(t_{k+1})-y(t_k)}{t_{k+1}-t_k}\int_{t_k}^{t_{k+1}}(t_j-\zeta)^{-\alpha}d\zeta+\varepsilon^j,
\end{align}
where the approximation error $\varepsilon^j=\mathcal{O}(\tau^{2-\alpha})$, $\tau$ is the step size, provided $y(t)\in C^2[0,T]$.
\end{definition}
\begin{definition}\textbf{Mittag-Leffler function} \cite{kilbas2006theory}.\\
The following defines a Mittag-Leffler function as
\begin{equation}\label{mittag}
    E_{\alpha,\beta}(z)=\sum_{j=0}^\infty \frac{z^j}{\Gamma(\alpha j+\beta)},\ z\in\mathbb{C},
\end{equation}
where $\alpha$ is positive and $\beta\in\mathbb{R}$ are arbitrary constants.
\end{definition}
\subsection{\textbf{Rules for constructing  NSFD approximation}}
The NSFD schemes are discrete models to numerically solve differential equations that model important phenomena of natural and engineering sciences. An NSFD scheme is constructed using many rules stated by Mickens \cite{mickens1993nonstandard,mickens2002nonstandard,mickens2000applications}. Some of the requirements for our NSFD scheme construction are as follows:
\begin{enumerate}
    \item The orders of the discrete derivatives should be equal to the orders of the corresponding derivatives of the differential equations.
    \item DFs for the discrete derivatives must generally be expressed in terms of specific structured functions of the step-sizes.
\end{enumerate}
 In general, the discrete first derivative takes the form
 \[ \frac{du}{dt}\approx \frac{u_{k+1}-\nu(\tau)u_k}{\varphi(\tau)},
 \]
 where $\nu(\tau)$ and $\varphi(\tau)$ depend on the step-size $\tau$, and satisfy the conditions
 \[
 \nu(\tau)=1+\mathcal{O}(\tau^2)\quad \& \quad \varphi(\tau)=\tau+\mathcal{O}(\tau^2).
 \]
 Also, the functions $\nu(\tau)$ and $\varphi(\tau)$ may also depend explicitly on the various parameters that appear in the differential equations. Moreover, the function $\nu(\tau)$ is a continuous function of a step size $h$ and satisfies condition $0<\nu(\tau)<1,\ \tau\rightarrow 0$. In our work, $\nu(\tau)=1$. Some examples of the functions $\varphi(\tau)$ that satisfy these conditions are \cite{mickens2005advances,mickens2007calculation}
 \[
 \varphi(\tau)=\tau,\ \sinh(\tau),\ e^{\tau}-1,\ \sin(\tau),\ \frac{1-e^{(-\lambda \tau)}}{\lambda}, \text{ etc,.}
 \]
 There is no basis for the best choice of the function $\phi(h)$. For more details on the non-standard finite difference method, see \cite{mickens1993nonstandard,mickens2002nonstandard,mickens2000applications}.
 The NSFD approximation to be used for the spatial derivative in \eqref{diff_eqn} is as follows:
\begin{equation}\label{nsspace}
{\partial_{x}^{2}}u(x_m,t_n) = \frac{u(x_{m+1},t_n)-2u(x_m,t_n)+u(x_{m-1},t_n)}{\psi(h)}+\mathcal{O}(h^2),
\end{equation}
where, $\psi(h)=h^2+\mathcal{O}(h^3)$. 
\section{\texorpdfstring{$NSL1$}{NSL1} Approximation to Caputo Fractional Derivative}\label{sec3}
In this section, a nonstandard $L1$ approximation to the Caputo fractional derivative \eqref{caputo} is constructed. To begin with, the time interval $[0,T]$ is discretized as $\mathcal{T}=\{t_n:t_n=n\tau,\ n=0,1,2,\dots,N,\ N\in\mathbb{Z}^+\}$, by dividing it into $N$ equal subintervals of step length $\tau=\frac{T}{N}$. We use the method of Lagrange interpolation to approximate the unknown in the integrand of the Caputo fractional derivative \eqref{caputo}. The standard linear Lagrange interpolating polynomial of a function $y(t),\ t\in[t_{n-1},t_n],\ n\geq 1,$ is given by \eqref{lin_int}. Hence, employing the rules of NSFD approximations by Mickens \cite{mickens1993nonstandard}, the nonstandard version of \eqref{lin_int} for $y(t)$ appears as
\begin{equation}\label{ns_linint}
I_{ns}y(t) = \frac{t-t_n}{-\varphi(\tau)}y(t_{n-1})+\frac{t-t_{n-1}}{\varphi(\tau)}y(t_n),
\end{equation}
where, $\varphi(\tau)=\tau+\mathcal{O}(\tau^2)$. Then, the $NSL1$ approximation to the Caputo fractional derivative \eqref{caputo} using \eqref{ns_linint} can be written as
\begin{align*}
    _{C}D_{0,t}^{\alpha}y(t_n) &= \frac{1}{\Gamma(1-\alpha)}\int_{0}^{t_n}(t_n-\zeta)^{-\alpha}\frac{dy}{dt}(\zeta)d\zeta,\\
    &\approx \frac{1}{\Gamma(1-\alpha)}\sum_{k=0}^{n-1}\int_{t_k}^{t_{k+1}}(t_n-\zeta)^{-\alpha}\frac{dI_{ns}y}{dt}(\zeta)d\zeta,\\
    & =\frac{1}{\Gamma(1-\alpha)}\sum_{k=0}^{n-1}\frac{y(t_{k+1})-y(t_k)}{\varphi(\tau)}\int_{t_k}^{t_{k+1}}(t_n-\zeta)^{-\alpha}d\zeta,\\
    & = \frac{\tau^{1-\alpha}}{\varphi(\tau)\Gamma(2-\alpha)}\left[b_{1,\alpha}y(t_n)+\sum_{k=1}^{n-1}(b_{k+1,\alpha}-b_{k,\alpha})y(t_{n-k})-b_{n,\alpha}y(t_0)\right],\\
    & =:\ _{C}D_{N_{ns}}^{\alpha}y(t_n),
\end{align*}
where $b_{k,\alpha}=k^{1-\alpha}-(k-1)^{1-\alpha},\ k=1,2,\dots,n$. In the standard case, $\varphi(\tau)=\tau$. Moreover,
\begin{align*}
    \frac{\tau^{1-\alpha}}{\varphi(\tau)}&=\tau^{1-\alpha}\varphi(\tau)^{-1},\\
    &=\tau^{1-\alpha}\left(\tau+\mathcal{O}(\tau^2)\right)^{-1},\\
    &=\tau^{-\alpha}\left(1+\mathcal{O}(\tau)\right)^{-1}.
\end{align*}
Using binomial expansion of $(1+\mathcal{O}(\tau))^{-1}$ and $\varphi(\tau)^{-\alpha}$, it becomes
\begin{align*}
    \frac{\tau^{1-\alpha}}{\varphi(\tau)}&=\tau^{-\alpha}\left(1-\mathcal{O}(\tau)\right),\\
    &=\left(\tau^{-\alpha}+\mathcal{O}(\tau^{1-\alpha})\right),\\
    &=\varphi(\tau)^{-\alpha}=\phi(\tau)^{-1},
\end{align*}
where, $\phi(\tau)=\tau^{\alpha}+\mathcal{O}(\tau^{1+\alpha})$. Hence,
\begin{equation}\label{nscaputo}
   _{C}D_{N_{ns}}^{\alpha}y(t_n)= \frac{\phi(\tau)^{-1}}{\Gamma(2-\alpha)}\left[b_{1,\alpha}y(t_n)+\sum_{k=1}^{n-1}(b_{k+1,\alpha}-b_{k,\alpha})y(t_{n-k})-b_{n,\alpha}y(t_0)\right].
\end{equation}
Next, the following theorem presents the local truncation error estimate of the NSFD $L1$ approximation \eqref{nscaputo} to the Caputo fractional derivative \eqref{caputo}.
\begin{theorem}\label{trunclemma}
Let $y(t)\in C^2[0,T]$, then there exists a constant K such that $\forall\ t_n\in \mathcal{T},$ we have
\begin{equation}\label{trunc_eqn}
\left|_{C}D_{0,t}^{\alpha}y(t_n)-\ _{C}D_{N_{ns}}^{\alpha}y(t_n)\right|\leq K\tau^{2-\alpha},\quad n=1,2,\dots,N,
\end{equation}
$K$ being a finite positive constant independent of $\tau$.
\end{theorem}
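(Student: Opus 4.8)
\emph{Proof strategy.} The plan is to insert the standard $L1$ operator ${}_{C}D_{N}^{\alpha}$ (which is \eqref{nscaputo} with $\varphi(\tau)=\tau$, equivalently $\phi(\tau)=\tau^{\alpha}$) as an intermediary and to split
\[
{}_{C}D_{0,t}^{\alpha}y(t_n)-{}_{C}D_{N_{ns}}^{\alpha}y(t_n)
=\underbrace{\bigl({}_{C}D_{0,t}^{\alpha}y(t_n)-{}_{C}D_{N}^{\alpha}y(t_n)\bigr)}_{=:E_1^{n}}
+\underbrace{\bigl({}_{C}D_{N}^{\alpha}y(t_n)-{}_{C}D_{N_{ns}}^{\alpha}y(t_n)\bigr)}_{=:E_2^{n}}.
\]
Here $E_1^{n}$ is exactly the consistency error of the classical $L1$ scheme, while $E_2^{n}$ is a purely ``nonstandard'' correction coming solely from replacing the denominator $\tau$ by $\varphi(\tau)$; I would bound each piece and finish with the triangle inequality.

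For $E_1^{n}$, because $y\in C^{2}[0,T]$ one may simply quote the estimate recorded in \eqref{L1} to get $|E_1^{n}|=\mathcal{O}(\tau^{2-\alpha})$. For a self-contained derivation I would start from \eqref{caputo}, write $\int_{0}^{t_n}=\sum_{k=0}^{n-1}\int_{t_k}^{t_{k+1}}$, and use that on $[t_k,t_{k+1}]$ the standard linear interpolant has slope $\tfrac{y(t_{k+1})-y(t_k)}{\tau}$, so that $E_1^{n}=\tfrac{1}{\Gamma(1-\alpha)}\sum_{k=0}^{n-1}\int_{t_k}^{t_{k+1}}(t_n-\zeta)^{-\alpha}\tfrac{d}{d\zeta}(y-I_{s}y)(\zeta)\,d\zeta$. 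On the intervals with $k\le n-2$ I would integrate by parts; the boundary contributions vanish since $I_{s}y$ matches $y$ at the nodes, leaving $\alpha\int_{t_k}^{t_{k+1}}(t_n-\zeta)^{-\alpha-1}(y-I_{s}y)(\zeta)\,d\zeta$, which by the interpolation error \eqref{int_error} is at most $\|y''\|_{\infty}\tau^{2}\int_{t_k}^{t_{k+1}}(t_n-\zeta)^{-\alpha-1}d\zeta$; summing the telescoping integral over $[0,t_{n-1}]$ produces a factor $\tau^{-\alpha}$ and hence an $\mathcal{O}(\tau^{2-\alpha})$ bound. On the last interval $[t_{n-1},t_n]$, which needs separate treatment, I would bound $\bigl|y'(\zeta)-\tfrac{y(t_n)-y(t_{n-1})}{\tau}\bigr|\le\|y''\|_{\infty}\tau$ via the mean value theorem against $\int_{t_{n-1}}^{t_n}(t_n-\zeta)^{-\alpha}d\zeta=\tfrac{\tau^{1-\alpha}}{1-\alpha}$, once more of order $\tau^{2-\alpha}$.

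For $E_2^{n}$, the two discrete operators differ only through their scalar prefactor, so
\[
E_2^{n}=\frac{\tau^{-\alpha}-\phi(\tau)^{-1}}{\Gamma(2-\alpha)}\left[b_{1,\alpha}y(t_n)+\sum_{k=1}^{n-1}(b_{k+1,\alpha}-b_{k,\alpha})y(t_{n-k})-b_{n,\alpha}y(t_0)\right].
\]
I would control the scalar factor through the expansion established just before \eqref{nscaputo}, $\phi(\tau)^{-1}=\tfrac{\tau^{1-\alpha}}{\varphi(\tau)}=\tau^{-\alpha}+\mathcal{O}(\tau^{1-\alpha})$, using the full hypothesis $\varphi(\tau)=\tau+\mathcal{O}(\tau^{2})$ to fix its order, and I would control the bracket by re-indexing it back into the telescoped form $\sum_{k=0}^{n-1}b_{n-k,\alpha}\bigl(y(t_{k+1})-y(t_k)\bigr)$ and estimating $\bigl|\sum_{k=0}^{n-1}b_{n-k,\alpha}(y(t_{k+1})-y(t_k))\bigr|\le\|y'\|_{\infty}\tau\sum_{j=1}^{n}b_{j,\alpha}=\|y'\|_{\infty}\tau\,n^{1-\alpha}=\|y'\|_{\infty}t_n^{1-\alpha}\tau^{\alpha}\le\|y'\|_{\infty}T^{1-\alpha}\tau^{\alpha}$, where I used the telescoping identity $\sum_{j=1}^{n}b_{j,\alpha}=n^{1-\alpha}$ and $n\tau=t_n\le T$. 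Multiplying these two bounds and absorbing all $\tau$-independent constants (depending only on $\alpha$, $T$, $\|y'\|_{\infty}$ and $\|y''\|_{\infty}$) into one constant $K$ then yields \eqref{trunc_eqn}.

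I expect the estimation of $E_2^{n}$ to be the main obstacle. Since the nonstandard interpolant \eqref{ns_linint} does not reproduce $y$ at the nodes, the convenient vanishing-boundary-term integration by parts used for $E_1^{n}$ is unavailable for it directly, and the order of the correction has to be extracted from the asymptotics of $\varphi(\tau)$ relative to $\tau$; it is exactly here that the hypothesis $\varphi(\tau)=\tau+\mathcal{O}(\tau^{2})$ (equivalently $\phi(\tau)=\tau^{\alpha}+\mathcal{O}(\tau^{1+\alpha})$), the cancellation in the coefficient sum $\sum_{j}b_{j,\alpha}=n^{1-\alpha}$, and the uniform bound $t_n\le T$ must all be exploited in order to keep $E_2^{n}$ from spoiling the $\tau^{2-\alpha}$ rate. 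The kernel singularity on the final subinterval in $E_1^{n}$ is the familiar secondary technicality, handled above by a direct mean-value estimate in place of integration by parts.
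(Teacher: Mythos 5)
Your decomposition through the standard $L1$ operator is clean and genuinely different from the paper's route: the paper instead Taylor-expands the nonstandard interpolation error $y-I_{ns}y$ directly, argues that since $\varphi(\tau)\approx\tau$ as $\tau\to0$ it collapses to the standard interpolation error $\tfrac{1}{2}(t-t_{n-1})(t-t_n)y''(\xi_n)$, and then invokes the Lin--Xu argument. Your treatment of $E_1^{n}$ is the standard consistency estimate and is fine. The problem is the last step of the $E_2^{n}$ estimate: you multiply a factor of size $\mathcal{O}(\tau^{1-\alpha})$ (the prefactor gap $\tau^{-\alpha}-\phi(\tau)^{-1}$) by a factor of size $\mathcal{O}(\tau^{\alpha})$ (the bracket) and assert that this ``yields \eqref{trunc_eqn}'', but $\tau^{1-\alpha}\cdot\tau^{\alpha}=\tau$, and since $2-\alpha>1$ for $\alpha\in(0,1)$ an $\mathcal{O}(\tau)$ bound is strictly \emph{weaker} than the claimed $\mathcal{O}(\tau^{2-\alpha})$. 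The triangle inequality then only delivers $\left|{}_{C}D_{0,t}^{\alpha}y(t_n)-{}_{C}D_{N_{ns}}^{\alpha}y(t_n)\right|\leq K\tau$, not $K\tau^{2-\alpha}$.

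Moreover, this is not a looseness you can tighten within your decomposition: the bracket equals $\Gamma(2-\alpha)\tau^{\alpha}\,{}_{C}D_{0,t}^{\alpha}y(t_n)+\mathcal{O}(\tau^{2})$, and for a genuinely nonstandard $\varphi(\tau)=\tau+c\tau^{2}+\cdots$ with $c\neq0$ the prefactor gap really is of order $\tau^{1-\alpha}$, so $E_2^{n}\asymp\tau\,\left|{}_{C}D_{0,t}^{\alpha}y(t_n)\right|$ whenever the Caputo derivative is nonzero. Your route therefore shows that the stated rate cannot be obtained this way unless one either strengthens the hypothesis on $\varphi$ (e.g.\ $\varphi(\tau)=\tau+\mathcal{O}(\tau^{3-\alpha})$, so that the prefactor gap becomes $\mathcal{O}(\tau^{2-2\alpha})$) or weakens the conclusion to $\mathcal{O}(\tau)$. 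The paper's own proof obscures exactly this contribution by replacing $\varphi(\tau)$ with $\tau$ ``as $\tau\to0$'' before estimating, which silently discards the terms your $E_2^{n}$ isolates; so the gap you would need to close is present in the argument for the theorem as stated, not merely in your write-up, but as submitted your proposal does not establish \eqref{trunc_eqn}.
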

\begin{proof}
    Let the local truncation error of the nonstandard linear Lagrange interpolating polynomial \eqref{ns_linint}, $t\in[t_{n-1},t_n],\ n\geq1$, be denoted and defined as
\begin{align*}
    T_{ns}(t)&=y(t)-I_{ns}y(t),\quad \\
    &=y(t)-\left[\frac{t-t_n}{-\varphi(\tau)}y(t_{n-1})+\frac{t-t_{n-1}}{\varphi(\tau)}y(t_n)\right],\\
    &=y(t)-\frac{\tau}{\varphi(\tau)}\left[\frac{t-t_n}{-\tau}y(t_{n-1})+\frac{t-t_{n-1}}{\tau}y(t_n)\right],\\
    &=y(t_{n-1}+\epsilon)-\frac{\tau}{\varphi(\tau)}\left[\frac{t_{n-1}+\epsilon-t_n}{-\tau}y(t_{n-1})+\frac{t_{n-1}+\epsilon-t_{n-1}}{\tau}y(t_{n-1}+\tau)\right],\\
    &=\left[y(t_{n-1})+\epsilon y'(t_{n-1})+\frac{\epsilon^2}{2}y''(t_{n-1})+\cdots\right]-\frac{\tau}{\varphi(\tau)}\left[\frac{\tau-\epsilon}{\tau}y(t_{n-1})\right]\\
    &\hspace{3.6cm}-\frac{\epsilon}{\varphi(\tau)}\left[y(t_{n-1})+\tau y'(t_{n-1})+\frac{\tau^2}{2}y''(t_{n-1})+\cdots\right],\\
    &=\left[1-\frac{\tau}{\varphi(\tau)}\right]y(t_{n-1})+\epsilon\left[1-\frac{\tau}{\varphi(\tau)}\right]y'(t_{n-1})+\frac{\epsilon}{2}\left[\epsilon-\frac{\tau^2}{\varphi(\tau)}\right]y''(t_{n-1})+\cdots,\\
    &=\left[1-\frac{\tau}{\varphi(\tau)}\right]\left(y(t_{n-1})+\epsilon y'(t_{n-1})\right)+\frac{\epsilon}{2}\left[\epsilon-\frac{\tau^2}{\varphi(\tau)}\right]y''(t_{n-1})+\cdots.
\end{align*}
Now, since $\varphi(\tau)=\tau+\mathcal{O}(\tau^2)$, thus, as $\tau\rightarrow 0$, $\ \varphi(\tau)\approx\tau$. Therefore, 
\begin{align*}
    T_{ns}(t)&=\frac{(t-t_{n-1})}{2}\left[(t-t_{n-1})-(t_n-t_{n-1})\right]y''(t_{n-1})+\cdots,\\
    &=\frac{(t-t_{n-1})(t-t_{n})}{2}y''(\xi_n),\quad \xi_n\in(t_{n-1},t_n), \quad \text{as }\tau\rightarrow 0.
\end{align*}
Further proceeding as in \cite[Lemma 3.1]{lin2007finite}, inequality \eqref{trunc_eqn} can be easily obtained.
\end{proof}
\section{\textbf{NSFD Scheme for Caputo-type Time-fractional IVP}
}\label{sec4}
Following the Caputo-type fractional IVP is considered for further computations in this section.
\begin{equation}\label{Caputo_eqn}
_CD^\alpha_{0,t}y(t)=f(t),\quad y(0)=y_0,\quad t\in[0,T].
\end{equation}
The discretization of the time interval $[0, T]$ is performed in the same way as in the previous section. Considering the above equation at time level $t_n$ and using $y(t_n)\approx y^n$ with $NSL1$ approximation \eqref{nscaputo} for the Caputo fraction derivative yields
\begin{align}\notag
&\hspace{1.5cm}\frac{1}{\Gamma(1-\alpha)}\sum_{k=0}^{n-1}\frac{y^{k+1}-y^k}{\tau}\int_{t_k}^{t_{k+1}}(t_n-\zeta)^{-\alpha}d\zeta=f^n,\\\notag
&\hspace{0.6cm}\frac{\phi(\tau)^{-1}}{\Gamma(2-\alpha)}\left[b_{1,\alpha}y^n+\sum_{j=1}^{n-1}\left(b_{j+1,\alpha}-b_{j,\alpha}\right)y^{n-j}-b_{n,\alpha}y^0\right]=f^n,\\\label{ivpscheme}
&\hspace{0.9cm}y^n=b_{n,\alpha}y^0+\sum_{j=1}^{n-1}\left(b_{j,\alpha}-b_{j+1,\alpha}\right)y^{n-j}+\phi(\tau)\Gamma(2-\alpha)f^n.
\end{align}
\subsection{\textbf{Absolute stability analysis for} \texorpdfstring{$\boldsymbol{NSL1}$}{NSL1} \textbf{method}}
This section analyses the absolute stability of the $NSL1$ Method \eqref{nscaputo}. The model test problem for the Caputo fractional derivative \eqref{caputo} with scalar parameter $\lambda$ is written as
\begin{equation*}
_CD^\alpha_{0,t}y(t)=\lambda y,\quad y(0)=y_0,\quad t\in[0,T].
\end{equation*}
The solution of the above IVP is given by \cite{kilbas2006theory}
\begin{equation*}
    y(t)=y_0 E_{\alpha,1}(\lambda t^\alpha),
\end{equation*}
where $E_{\alpha,1}$ is the Mittag-Leffler function \eqref{mittag}. In the above solution, $\lim_{t\rightarrow\infty}y(t)=0$ and hence, bounded for $|arg(\lambda)|>\alpha\pi/2$ \cite{diethelm2010analysis}. Thus, absolute stability requires the approximate numerical solution of the above IVP to imitate the same behavior, i.e., $y^n\rightarrow 0$ as $t_n\rightarrow\infty$ under the same restrictions. Hence, replacing $f^n$ by $\lambda y^n$ in \eqref{ivpscheme}, we get
\begin{equation*}
\left(\frac{b_{1,\alpha}}{\Gamma(2-\alpha)}-\lambda\phi(\tau)\right)y^n=\frac{1}{\Gamma(2-\alpha)}\left[b_{n,\alpha}y^0+\sum_{j=1}^{n-1}\left(b_{j,\alpha}-b_{j+1,\alpha}\right)y^{n-j}\right].
\end{equation*}
The above homogeneous linear diﬀerence equation with constant coeﬃcients has solutions of the form $y^n = ar^n$ \cite{elaydiintroduction}. Hence, considering $\hat{\tau}=\lambda\tau^{\alpha}$, $d_{k,\alpha}=\frac{b_{k,\alpha}}{\Gamma(2-\alpha)},\ k=1,2,\dots,n$, and substituting the value of $y^n$ in the above equation provides
\begin{equation}\label{stab_pol}
   \left(d_{1,\alpha}-\hat{\tau}\left(1+\mathcal{O}(\tau)\right)\right)r^n+\sum_{j=1}^{n-1}\left(d_{j+1,\alpha}-d_{j,\alpha}\right)r^{n-j}-d_{n,\alpha}r^0=0. 
\end{equation}
The left-hand side of the above equation is the stability polynomial $p(r)$ of degree $n$. Next, the values of $\hat{\tau}$ are determined for which the roots of $p(r)$ satisfy the strict root condition, i.e., $|r|<1$. We employ the boundary locus method here for further analysis. Substituting $r=e^{is}$ and letting $\tau\rightarrow0$ in the above equation gives
\begin{align*}
&(d_{1,\alpha}-\hat{\tau})e^{ins}+\sum_{j=1}^{n-1}(d_{j+1,\alpha}-d_{j,\alpha})e^{i(n-j)s}-d_{n,\alpha}e^{i0s}=0,
    \end{align*}
    \begin{align*}
    &\hspace{0.6cm}(d_{1,\alpha}-\hat{\tau})(\cos ns+i\sin ns)+\sum_{j=1}^{n-1}(d_{j+1,\alpha}-d_{j,\alpha})(\cos (n-j)s+i\sin (n-j)s)-d_{n,\alpha}=0,\\
    &d_{1,\alpha}\cos ns+id_{1,\alpha}\sin ns-d_{n,\alpha}+\sum_{j=1}^{n-1}\cos (n-j)s(d_{j+1,\alpha}-d_{j,\alpha})+i\sum_{j=1}^{n-1}\sin (n-j)s(d_{j+1,\alpha}-d_{j,\alpha})\\
    &\hspace{6cm}=\hat{\tau}(\cos ns+i\sin ns),
\end{align*}
which implies
\begin{align*}
   \hat{\tau}(s)&= \frac{d_{1,\alpha}\cos ns-d_{n,\alpha}+\sum_{j=1}^{n-1}\cos (n-j)s(d_{j+1,\alpha}-d_{j,\alpha})+i\left(d_{1,\alpha}\sin ns+\sum_{j=1}^{n-1}\sin (n-j)s(d_{j+1,\alpha}-d_{j,\alpha}\right)}{(\cos ns+i\sin ns)},\\
&= d_{1,\alpha}\cos^2 ns-d_{n,\alpha}\cos ns+\sum_{j=1}^{n-1}\cos (n-j)s\cos ns(d_{j+1,\alpha}-d_{j,\alpha})+d_{1,\alpha}\sin^2ns\\
   &\hspace{.3cm}+\sum_{j=1}^{n-1}\sin (n-j)s\sin ns(d_{j+1,\alpha}-d_{j,\alpha})+i\bigg(d_{1,\alpha}\sin ns\cos ns+\sum_{j=1}^{n-1}\sin (n-j)s\cos ns(d_{j+1,\alpha}-d_{j,\alpha})\\
&\hspace{.3cm}+d_{1,\alpha}\cos ns\sin ns-d_{n,\alpha}\sin ns+\sum_{j=1}^{n-1}\sin (n-j)s\cos (n-j)s(d_{j+1,\alpha}-d_{j,\alpha})\bigg),\\
&= d_{1,\alpha}-d_{n,\alpha}\cos ns+\sum_{j=1}^{n-1}\cos ns(d_{j+1,\alpha}-d_{j,\alpha})\\
&\hspace{.3cm}+i\bigg(d_{1,\alpha}\sin 2ns-d_{n,\alpha}\sin ns+\sum_{j=1}^{n-1}\sin (2n-j)s(d_{j+1,\alpha}-d_{j,\alpha})\bigg),
\end{align*}
\begin{align*}
\text{where, }\ \hat{x}(s)&=d_{1,\alpha}-d_{n,\alpha}\cos ns+\sum_{j=1}^{n-1}\cos ns(d_{j+1,\alpha}-d_{j,\alpha}).\\
    \hat{y}(s)&=d_{1,\alpha}\sin 2ns-d_{n,\alpha}\sin ns+\sum_{j=1}^{n-1}\sin (2n-j)s(d_{j+1,\alpha}-d_{j,\alpha}).
\end{align*}
The plot of the locus of points $\hat{x}(s)$ and $\hat{y}(s)$, $0\leq s < 2\pi$, for different values of $\alpha$ is illustrated in Figure \ref{absolute}. 
\begin{figure}[ht]
    \begin{center}
	\centering
		\subfigure[$\alpha=0.2$]{
	\includegraphics[width=7.714cm]{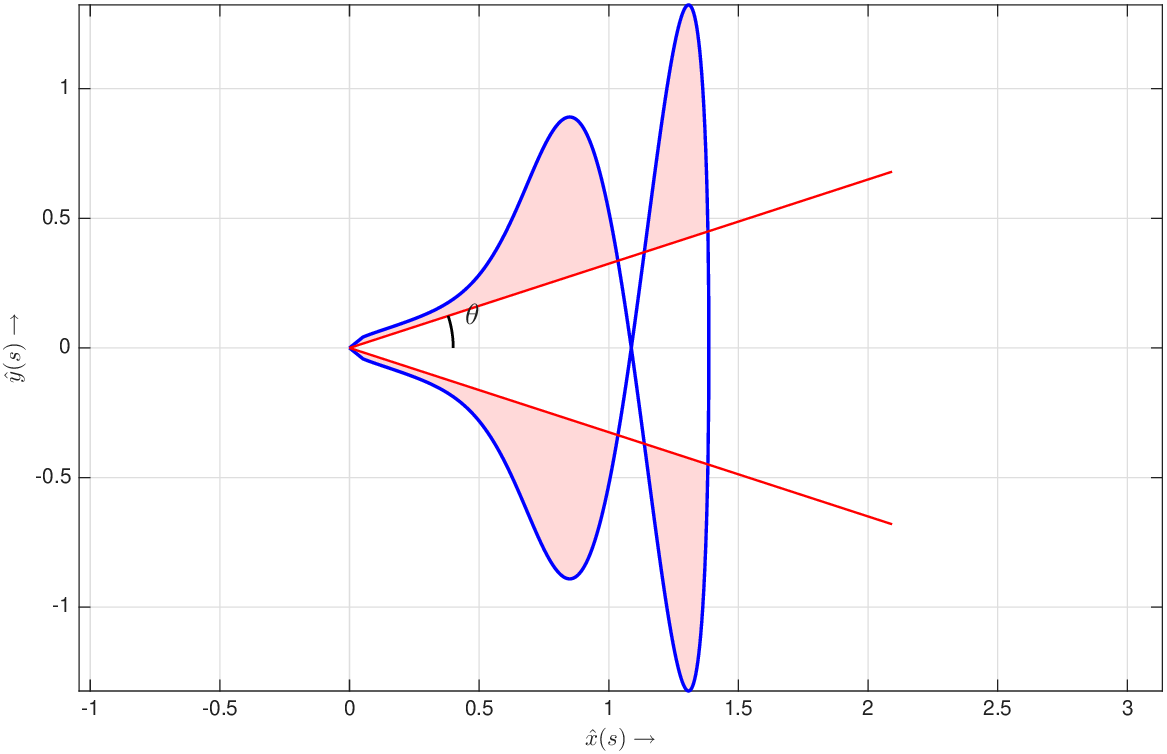}
			\label{subfigure1}} 
		\subfigure[$\alpha=0.4$]{
	\includegraphics[width=7.714cm]{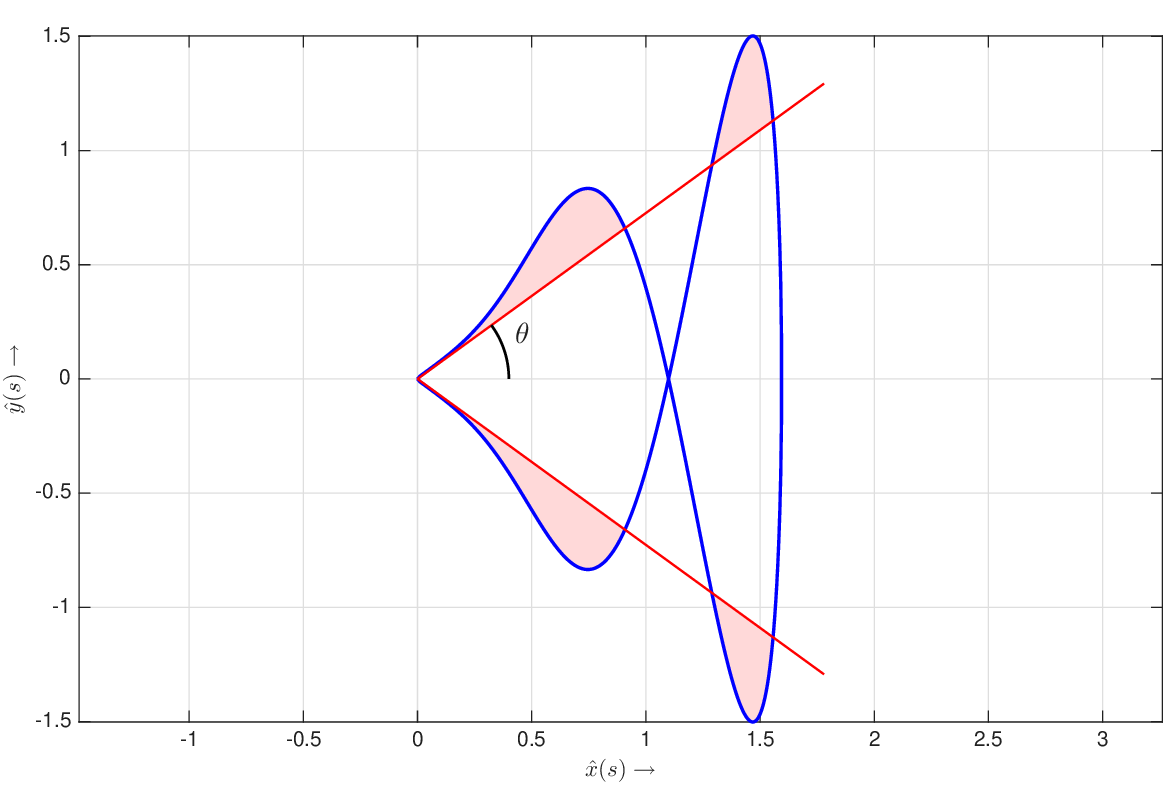}			 \label{subfigure2}} \\
    \subfigure[$\alpha=0.6$]{
	\includegraphics[width=7.714cm]{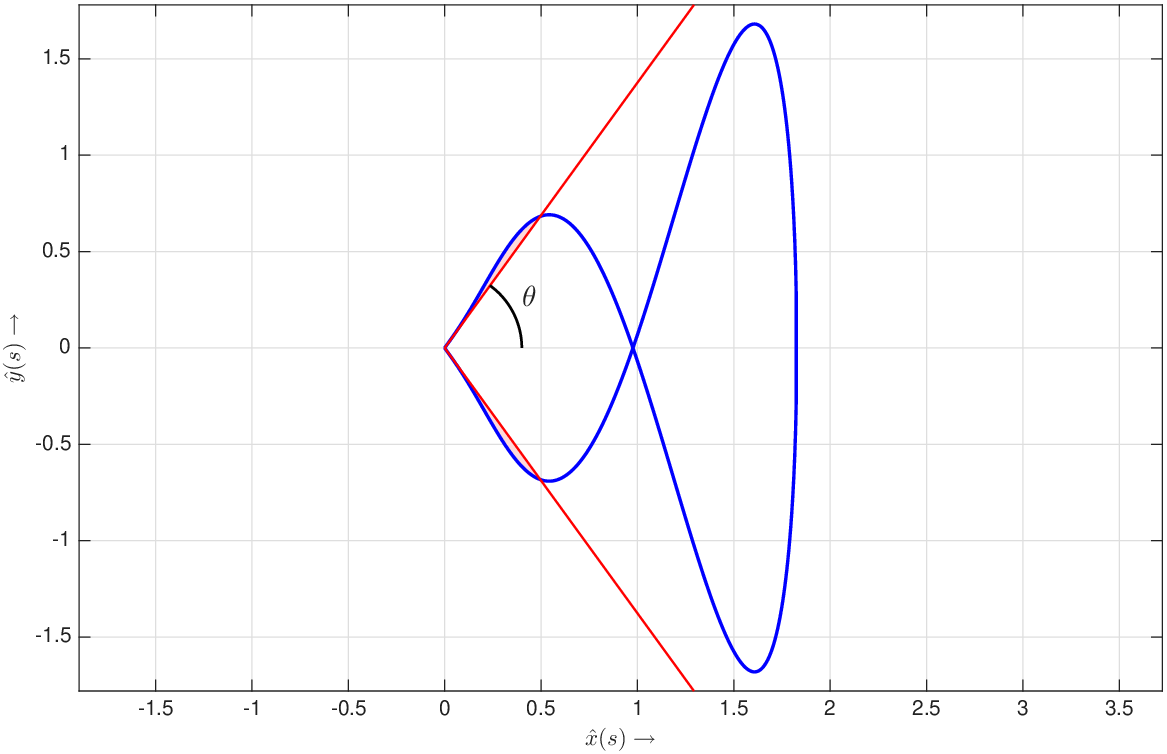}
			\label{subfigure3}} 
		\subfigure[$\alpha=0.8$]{
	\includegraphics[width=7.714cm]{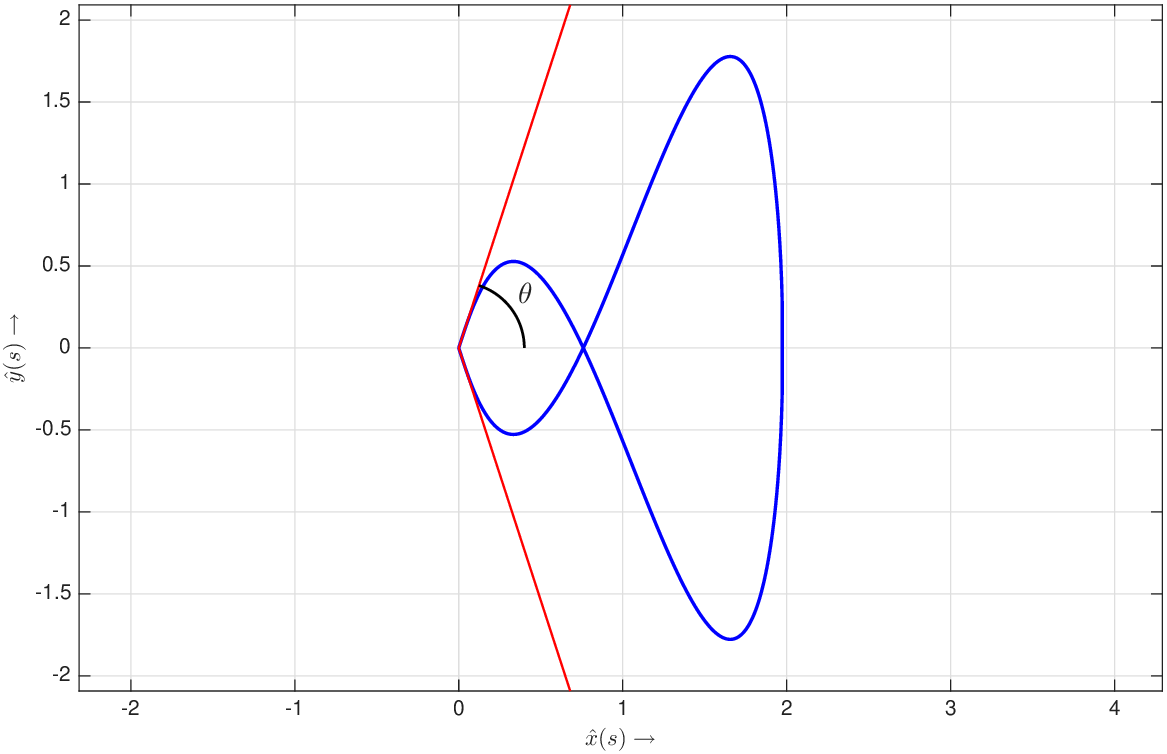}			 \label{subfigure4}}
    \caption{Locus plots of $\hat{x}(s)$ and $\hat{y}(s)$ for $0\leq s < 2\pi$ at different values of $\alpha$ with $n=100000$ and $\theta=\frac{\alpha\pi}{2}$.} 
    \label{absolute}
    \end{center}
\end{figure}
Here, the curve in all subfigures divides the plane into two closed and one open subregions, and each subregion is checked for the region of absolute stability. For this, we choose the test points as $\hat{x}(s)=-0.5,0.5$, and 1.5, along with $\hat{y}(s)=0$ and check whether the roots $r^n$ of the stability polynomial $p(r)$ \eqref{stab_pol} at these points satisfy the strict root condition $|r|<1$. 
We consider the root vector $r=(r_1,r_2,\dots,r_n)$ at each $n$ and plot the maximum absolute value of $r$ i.e., $r_{max}=\max_{1\leq j\leq n} |r_j|$ against $n$ for fixed value of $N=1000$ and different values of $\alpha$ for $\hat{x}(s)=-0.5,\ 0.5$, and 1.5, $\hat{y}(s)=0$ in Figure \ref{figure:2}. 
\begin{figure}[!h]
    \begin{center}
	\centering
		\subfigure[$\alpha=0.2$]{
	\includegraphics[width=7.714cm]{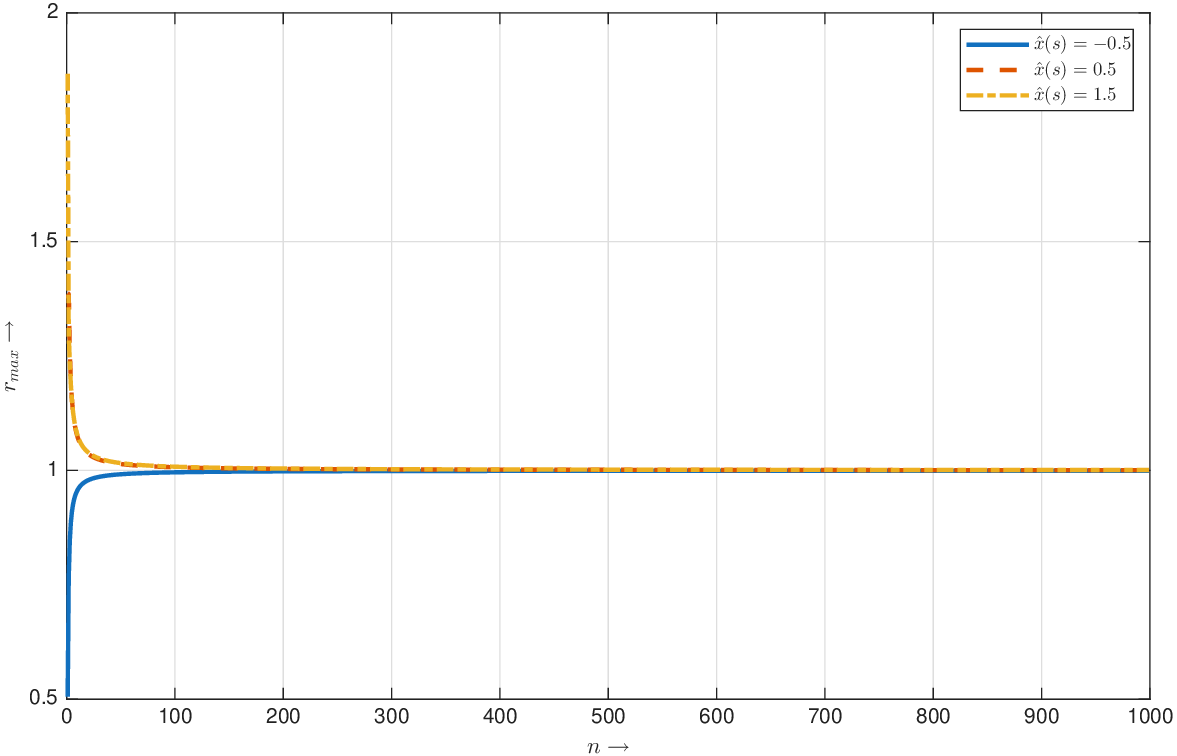}
		\label{subfigure5}} 
		\subfigure[$\alpha=0.4$]{
	\includegraphics[width=7.714cm]{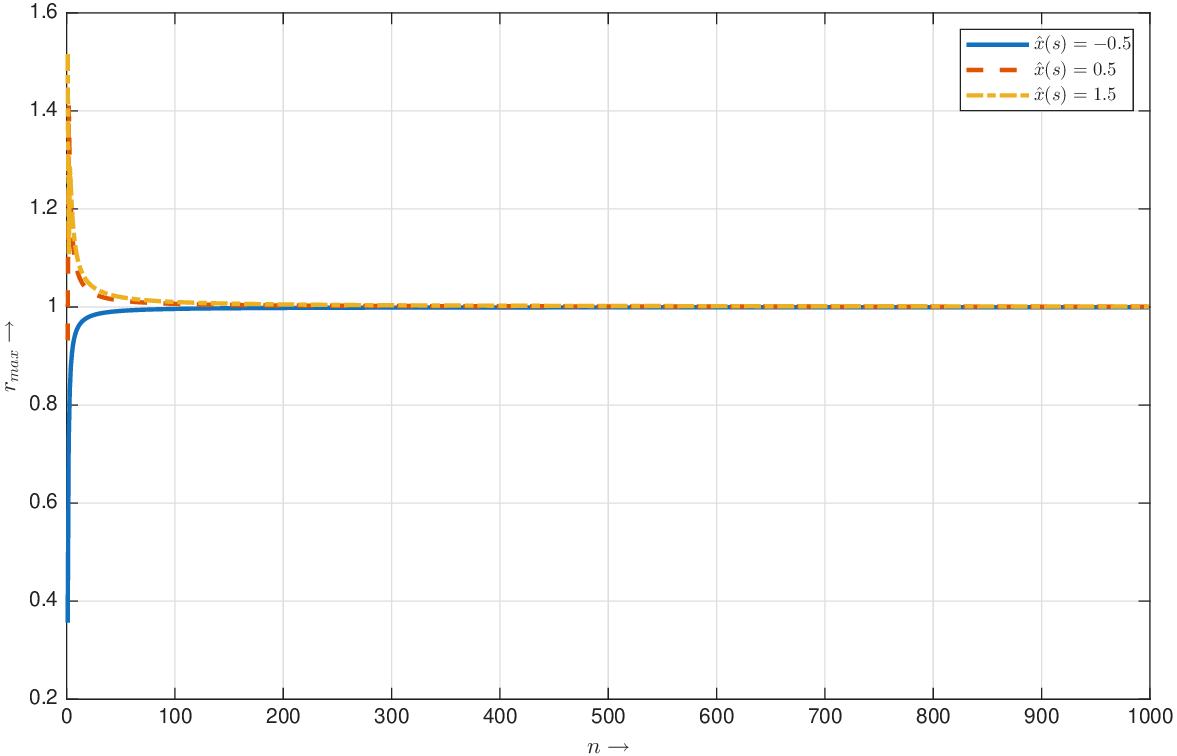}	
    \label{subfigure6}}
    \\
    \subfigure[$\alpha=0.6$]{
	\includegraphics[width=7.714cm]{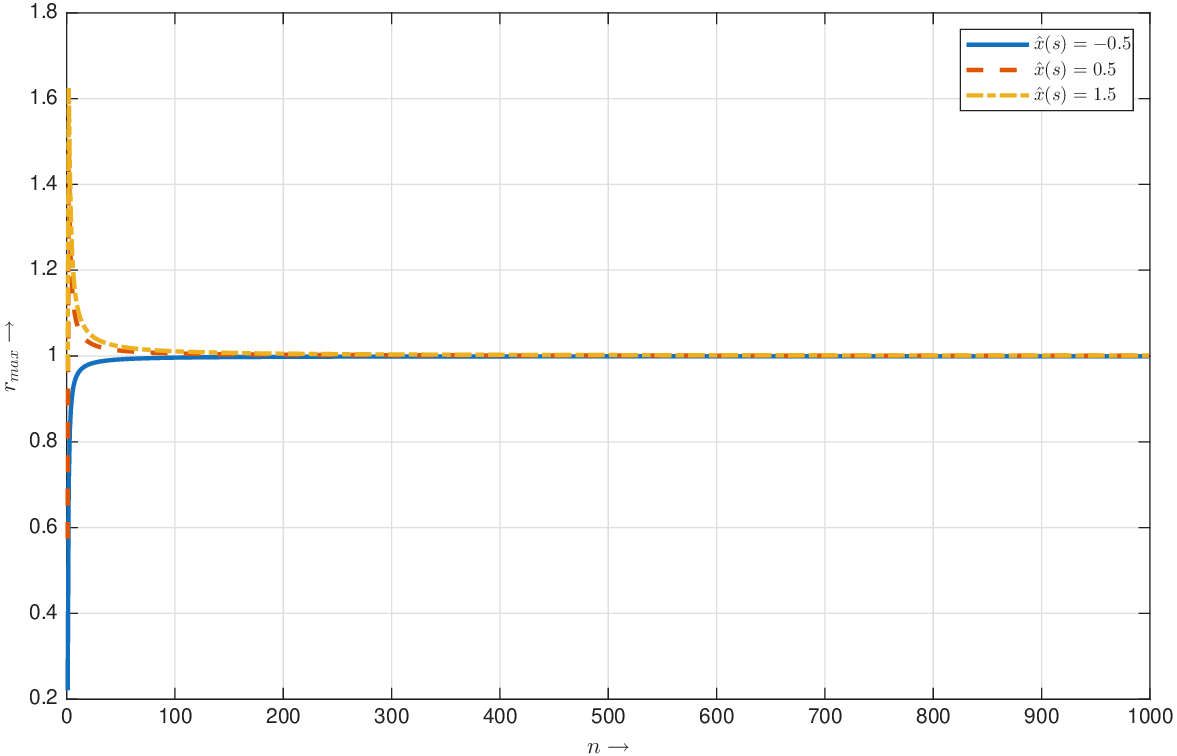}
			\label{subfigure51}} 
		\subfigure[$\alpha=0.8$]{
	\includegraphics[width=7.714cm]{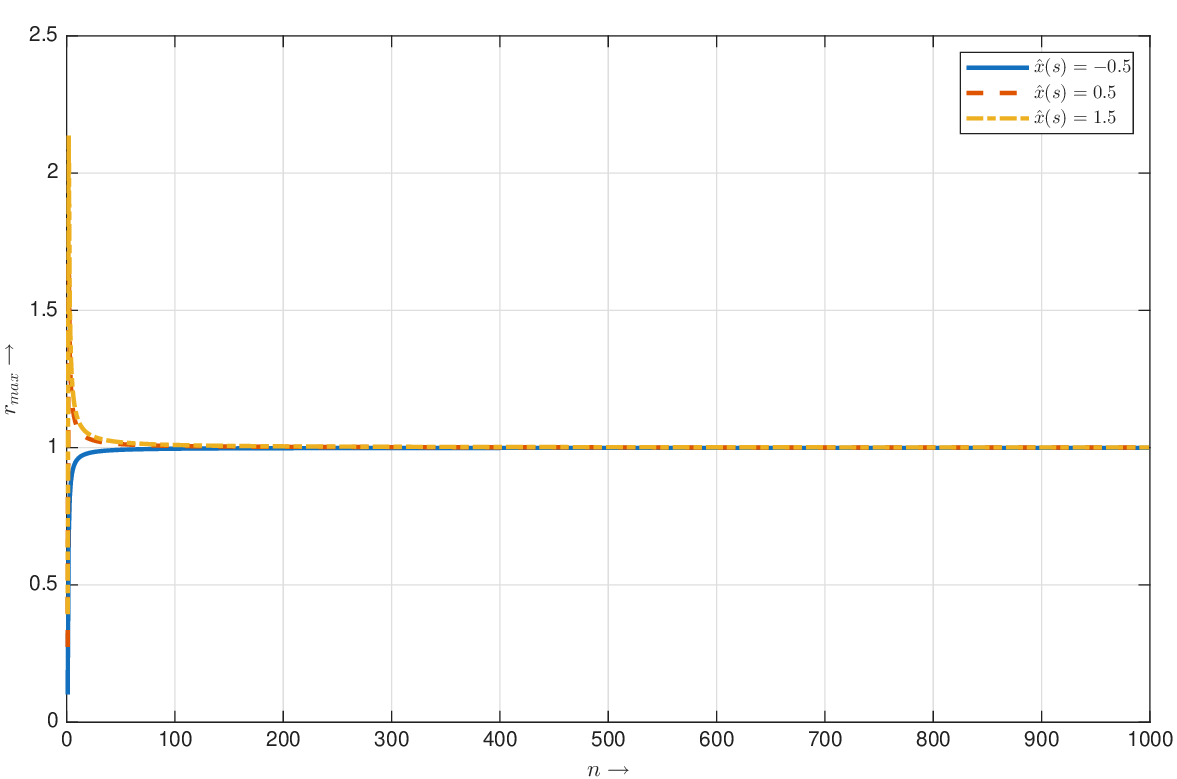}	
    \label{subfigure61}}
    \caption{Curve plots of $r_{max}$ against $n$ with $N=100$ at $\alpha=0.5$ and different values of $\hat{x}(s)$.} \label{figure:2}
    \end{center}
\end{figure}
It can be seen from these subfigures that $r_{max}\geq1$ for $\hat{x}(s)=0.5$ and 1.5, but  <1 for $\hat{x}(s)=-0.5$. Hence, it is concluded that although the method is more stable for higher values of $\alpha$, their absolute stability region includes the entire left half of the plane, and thus, the $NSL1$ method \eqref{nscaputo} for Caputo fractional derivative is $A(\alpha)$-stable.
\begin{remark}
    Since, in the standard case, $\phi(\tau)=\tau^\alpha$. Hence, from the above computations, it can be concluded that the standard $L1$ method \eqref{L1} for the Caputo derivative approximation is also $A(\alpha)$-stable.
\end{remark}
\subsection{\textbf{Numerical experiments for}\texorpdfstring{$\boldsymbol{NSL1}$}{NSL1} \textbf{method}}
The accuracy and efficiency of the $NSL1$ method \eqref{nscaputo} using scheme \eqref{ivpscheme} are inspected by conducting several numerical experiments over test examples. Here, two test examples are considered, first with the state function being $\alpha$ dependent and having a zero initial value, and second being $\alpha$ independent and having a non-zero initial value. The experiments are conducted for different values of $N$ at different $\alpha$s to conclude the convergence of these NSFD approximations. Various options of DFs are considered, as there is no fixed rule for their best choice. The exact solution for each test example is known in advance, and thus facilitates the comparison of errors. Furthermore, the $L_\infty$ norm is used to compute errors between exact and approximate solutions at the final time $T$, along with the rate of convergence, defined as
\begin{equation}\label{error}
E_{\infty}(N)=\left|u(t_N)-u^N\right|\quad \text{ and }\quad {rate} =\log_2\left(\frac{E_\infty(N/2)}{E_\infty(N)}\right).
\end{equation}
\begin{example}\label{ex1}
Consider the Caputo-type IVP \eqref{Caputo_eqn} with following data:
\begin{align*}
    y_0=0 \text{ and }f(t)=\frac{t^2\Gamma\left(3+\alpha\right)}{2},\quad t\in[0,1].
\end{align*}
A clear observation for the exact solution to the above IVP is $y(t)=t^{2+\alpha}$.
\end{example}
\begin{example}\label{ex2}
Consider the Caputo-type IVP \eqref{Caputo_eqn} with following data:
\begin{align*}
    y_0=1 \text{ and }f(t)=\frac{2t^{2-\alpha}}{\Gamma\left(3-\alpha\right)},\quad t\in[0,1].
\end{align*}
The exact solution to the above IVP is given by $y(t)=t^2+1$.
\end{example}
\begin{table}[ht]
\caption{$L_\infty$ errors and corresponding convergence orders for Example \ref{ex1} at $T=1$ for different $\alpha$, $N$ and $\varphi(\tau)$.}
    \label{table:1}
    \centering
    \begin{tabular}{c c c c c c c c c c}
    \hline
\multirow{2}{1em}{$\alpha$} & \multirow{2}{1em}{$N$} &\multicolumn{2}{c} 
   {$\varphi(\tau)=\tau$} &\multicolumn{2}{c} 
   {$\varphi(\tau)=1000(e^{\frac{\tau}{1000}}-1)$} &\multicolumn{2}{c} 
   {$\varphi(\tau)=\sin{\tau}$} &\multicolumn{2}{c} 
   {$\varphi(\tau)=\sinh{\tau}$}\\
   \cline{3-10} 
  & & $E_\infty$ & {rate} & $E_\infty$ & {rate} & $E_\infty$ & {rate} & $E_\infty$ & {rate}\\ 
  \hline
$0.3$& $10$ & 7.1000e-03 &  & 7.2000e-03 &  & 5.4000e-03 &  & 8.8000e-03 & \\ 
  & $20$ & 2.3000e-03 & 1.6262 & 2.4000e-03 & 1.5981 & 1.9000e-03 & 1.5070 & 2.8000e-03 & 1.6521\\ 
  & $40$ & 7.5669e-04 & 1.6039 & 7.6920e-04 & 1.6191 & 6.5244e-04 & 1.5421& 8.6094e-04 & 1.7014\\ 
  & $80$ & 2.4208e-04 & 1.6442 & 2.4833e-04 & 1.6311 & 2.1603e-04 & 1.5946 & 2.6813e-04 & 1.6830\\  
  & $160$ & 7.6786e-05 & 1.6566 & 7.9911e-05 & 1.6358 & 7.0275e-05 & 1.6201 & 8.3297e-05 & 1.6866\\ 
  & $320$ & 2.4200e-05 & 1.6658 & 2.5763e-05 & 1.6331 & 2.2573e-05 & 1.6384 & 2.5828e-05 & 1.6893\\ 
  \hline
$0.5$& $10$ &  2.1300e-02 &  & 2.1400e-02 &  & 1.9600e-02 &  & 2.3000e-02 & \\ 
  & $20$ & 7.9000e-03 & 1.4309 & 7.9000e-03 & 1.4328 & 7.5000e-03 & 1.3859 & 8.3000e-03 & 1.4705\\ 
  & $40$ & 2.9000e-03 & 1.4458 & 2.9000e-03 & 1.4539 & 2.8000e-03 & 1.4215 & 3.0000e-03 & 1.4681\\ 
  & $80$ & 1.0000e-03 & 1.5361 & 1.0000e-03 & 1.4674 & 1.0000e-03 & 1.4854 & 1.1000e-03 & 1.4475\\  
  & $160$ & 3.7271e-04 & 1.4239 & 3.7584e-04 & 1.4759 & 3.6620e-04 & 1.4493 & 3.7923e-04 & 1.5364\\ 
  & $320$ & 1.3309e-04 & 1.4857 & 1.3465e-04 & 1.4809 & 1.3146e-04 & 1.4780 & 1.3471e-04 & 1.4932\\ 
  \hline
$0.7$& $10$ & 5.1300e-02 &  & 5.1300e-02 &  & 4.9500e-02 &  & 5.3000e-02 & \\ 
  & $20$ & 2.1400e-02 & 1.2613 & 2.1400e-02 & 1.2618 & 2.1000e-02 & 1.2370 & 2.1800e-02 & 1.2817\\ 
  & $40$ & 8.8000e-03 & 1.2820 & 8.8000e-03 & 1.2749 & 8.7000e-03 & 1.2713 & 8.9000e-03 & 1.2925\\ 
  & $80$ & 3.6000e-03 & 1.2895 & 3.6000e-03 & 1.2839 & 3.6000e-03 & 1.2730 & 3.7000e-03 & 1.2663\\  
  & $160$ & 1.5000e-03 & 1.2630 & 1.5000e-03 & 1.2897 & 1.5000e-03 & 1.2630 & 1.5000e-03 & 1.3026\\ 
  & $320$ & 6.0468e-04 & 1.3107 & 6.0624e-04 & 1.2933 & 6.0305e-04 & 1.3146 & 6.0631e-04 & 1.3068\\ 
  \hline
   \end{tabular}
\end{table}
\begin{table}[h!]
\caption{$L_\infty$ errors and corresponding convergence orders for Example \ref{ex2} at $T=1$ for different $\alpha$, $N$ and $\varphi(\tau)$.}
    \label{table:2}
    \centering
    \begin{tabular}{c c c c c c c c c c}
    \hline
\multirow{2}{1em}{$\alpha$} & \multirow{2}{1em}{$N$} &\multicolumn{2}{c} 
   {$\varphi(\tau)=\tau$} &\multicolumn{2}{c} 
   {$\varphi(\tau)=1000(e^{\frac{\tau}{1000}}-1)$} &\multicolumn{2}{c} 
   {$\varphi(\tau)=\sin{\tau}$} &\multicolumn{2}{c} 
   {$\varphi(\tau)=\sinh{\tau}$}\\
   \cline{3-10} 
  & & $E_\infty$ & {rate} & $E_\infty$ & {rate} & $E_\infty$ & {rate} & $E_\infty$ & {rate} \\ \hline
0.3 & $10$ & 5.5000e-03 &  & 5.5000e-03 &  & 3.8000e-03 &  & 7.1000e-03 & \\ 
 &  $20$ & 1.8000e-03 & 1.6114 & 1.8000e-03 & 1.6084 & 1.4000e-03 & 1.4406 & 2.2000e-03 & 1.6903\\ 
 &  $40$ & 5.7302e-04 & 1.6513 & 5.8553e-04 & 1.6251 & 4.6880e-04 & 1.5784 & 6.7725e-04 & 1.6997\\ 
 &  $80$ & 1.8250e-04 & 1.6507 & 1.8875e-04 & 1.6332 & 1.5645e-04 & 1.5833 & 2.0855e-04 & 1.6993\\  
 &  $160$ & 5.7693e-05 & 1.6614 & 6.0818e-05 & 1.6339 & 5.1182e-05 & 1.6120 & 6.4204e-05 & 1.6997\\ 
 &  $320$ & 1.8136e-05 & 1.6695 & 1.9699e-05 & 1.6264 & 1.6508e-05 & 1.6325 & 1.9764e-05 & 1.6998\\ 
   \hline
$0.5$ & $10$ & 1.4900e-02 &  & 1.5000e-02 &  & 1.3200e-02 &  & 1.6600e-02 & \\ 
 & $20$ & 5.5000e-03 & 1.4378 & 5.5000e-03 & 1.4442 & 5.1000e-03 & 1.3720 & 5.9000e-03 & 1.4924\\ 
 &  $40$ & 2.0000e-03 & 1.4594 & 2.0000e-03 & 1.4619 & 1.9000e-03 & 1.4245 & 2.1000e-03 & 1.4903\\ 
 &  $80$ & 7.1285e-04 & 1.4883 & 7.1910e-04 & 1.4726 & 6.8679e-04 & 1.4681 & 7.3891e-04 & 1.5069\\  
 &  $160$ & 2.5489e-04 & 1.4837 & 2.5802e-04 & 1.4787 & 2.4838e-04 & 1.4673 & 2.6140e-04 & 1.4991\\ 
 &  $320$ & 9.0821e-05 & 1.4888 & 9.2384e-05 & 1.4818 & 8.9194e-05 & 1.4775 & 9.2449e-05 & 1.4995\\ 
   \hline
$0.7$ & $10$ & 3.4000e-02 &  & 3.4100e-02 &  & 3.2300e-02 &  & 3.5800e-02 & \\ 
 & $20$ & 1.4200e-02 & 1.2596 & 1.4200e-02 & 1.2654 & 1.3700e-02 & 1.2374 & 1.4600e-02 & 1.2940\\ 
 & $40$ & 5.8000e-03 & 1.2918 & 5.8000e-03 & 1.2790 & 5.7000e-03 & 1.2651 & 5.9000e-03 & 1.3072\\ 
 & $80$ & 2.4000e-03 & 1.2730 & 2.4000e-03 & 1.2871 & 2.4000e-03 & 1.2479 & 2.4000e-03 & 1.2977\\  
 & $160$ & 9.7505e-04 & 1.2995 & 9.7818e-04 & 1.2919 & 9.6853e-04 & 1.3092 & 9.8156e-04 & 1.2899\\ 
 & $320$ & 3.9719e-04 & 1.2956 & 3.9876e-04 & 1.2946 & 3.9557e-04 & 1.2919 & 3.9882e-04 & 1.2993\\ 
   \hline
\end{tabular}
\end{table}
In both examples, the $L_\infty$ errors and their corresponding convergence rates are evaluated for increasing values of $N$ with respect to DFs $\varphi(\tau)=\tau$, $1000(e^{\frac{\tau}{1000}}-1)$, $\sin\tau$, and $\sinh\tau$, for different values of $\alpha$. The results obtained for Examples \ref{ex1} and \ref{ex2} are summarized in Tables \ref{table:1} and \ref{table:2}, respectively. Analysis of the data reveals that all DFs produce satisfactory results. Among the considered DFs, $\varphi(\tau) = \sin \tau$ demonstrates the highest level of accuracy, while $\varphi(\tau) = \sinh \tau$ yields the most rapid error convergence. Furthermore, the NSFD $L_1$ approximation employed in these examples exhibits a convergence order of $\mathcal{O}(\tau^{2-\alpha})$, with both the accuracy and convergence rate improving as $\alpha$ decreases and the number of subintervals increases.
\section{Explicit NSFD Schemes for  \texorpdfstring{1D and 2D}{1D and 2D} TFDEs}\label{sec5}
This portion covers the construction of explicit NSFD schemes for 1D and 2D TFDEs \eqref{diff_eqn} using the $NSL1$ approximation \eqref{nscaputo} for Caputo fractional derivative \eqref{caputo}. To begin with, the domain $\Omega\times[0, T]$ is discretized, in which temporal discretization is performed in the same way as in previous sections. Furthermore, for $d=1$ the space interval $[0, L]$ is divided into $M$ subintervals of steplength $h=\frac{L}{M}$. Hence, the domain ${\Omega}=[0,L]\times[0,T]$ is discretized into a uniform grid ${\omega}=\{(\mathbf{x}_m,t_n):x_m=mh,\ t_n=n\tau;\ m=0,1,\ldots, M,\ n=0,1,\ldots, N;\ M,N\in\mathbb{Z^+}\}$. Similarly, for $d=2$, the spatial domain $[0, L]\times[0,L]$ is disintegrated into $M^2$ number of square subdomains each of sidelength $h=\frac{L}{M}$. Hence, the uniformly discretized 2D space-time grid that is achieved is ${\omega}=\{(\mathbf{x}_m,t_n):x_i=ih,\ y_j=jh,\ t_n=n\tau;\ i,j=0,1,\ldots,M,\ m=j(M+1)+i,\ n=0,1,\ldots, N;\ M,N\in\mathbb{Z^+}\}$. 
\subsection*{\textbf{Explicit NSFD scheme for }$\boldsymbol{1D}$ \textbf{TFDE:}}
Considering TFDE \eqref{diff_eqn} for $d=1$ at grid point $(x_m,t_n)$, we get
\begin{equation*}
_{C}D_{0,t}^{\alpha}u(x_m,t_n)=\partial_x^{2}u(x_m,t_{n-1})+f(x_m,t_{n-1})+\mathcal{O}(\tau).
\end{equation*}
Replacing the temporal and spatial derivatives in the above by their nonstandard approximations \eqref{nscaputo} and \eqref{nsspace}, respectively, becomes
\begin{equation}
\begin{aligned}\label{original}
\frac{\phi(\tau)^{-1}}{\Gamma(2-\alpha)}&\left[b_{1,\alpha}u(x_m,t_n)+\sum_{k=1}^{n-1}(b_{k+1,\alpha}-b_{k,\alpha})u(x_m,t_{n-k})-b_{n,\alpha}u(x_m,t_0)\right]\\
&=\frac{u(x_{m+1},t_{n-1})-2u(x_m,t_{n-1})+u(x_{m-1},t_{n-1})}{\psi(h)}+f(x_m,t_{n-1})+R(x_m,t_n),
\end{aligned}
\end{equation}
where, $R(x_m,t_n)=\mathcal{O}(\tau)+\mathcal{O}(h^2)$. Now approximating $u(x_m,t_n)$ by $u_m^n$ and omitting the truncation error term $R(x_m,t_n)$ in \eqref{original} provides
\begin{equation}\label{duplicate}
\frac{\phi(\tau)^{-1}}{\Gamma(2-\alpha)}\left[b_{1,\alpha}u_m^n+\sum_{k=1}^{n-1}(b_{k+1,\alpha}-b_{k,\alpha})u_m^{n-k}-b_{n,\alpha}u_m^0\right]=\frac{u_{m+1}^{n-1}-2u_m^{n-1}+u_{m-1}^{n-1}}{\psi(h)}+f_m^{n-1}.
\end{equation}
Rearranging the terms in an explicit manner yields
\begin{align}\label{d1scheme}
u_m^n=\mu \left(u_{m+1}^{n-1}-2u_m^{n-1}+u_{m-1}^{n-1}\right)&+\sum_{k=1}^{n-1}\left(b_{k,\alpha}-b_{k+1,\alpha}\right)u_m^{n-k}+b_{n,\alpha}u_m^0+\phi(\tau)\Gamma(2-\alpha)f_m^{n-1},
\end{align}
where, $\mu=\frac{\phi(\tau)\Gamma(2-\alpha)}{\psi(h)}$. The discrete form of initial and Dirichlet boundary conditions \eqref{ini_eqn} and \eqref{bdry_eqn} for $d=1$ can be written as
\begin{align}\label{ini_diff1}
&u_m^0 = u^0(x_m),\ \ 0\leq m\leq M,\\
\label{bdry_diff1}
&\ u_0^n = u_M^n = 0,\ \ 0 \leq n \leq N.
\end{align}
\subsection*{\textbf{Explicit NSFD scheme for }$\boldsymbol{2D}$ \textbf{TFDE:}}
TFDE \eqref{diff_eqn} for $d=2$ when considered at grid point $(x_i,y_j,t_n)$ appears as
\begin{equation*}
_{C}D_{0,t}^{\alpha}u(x_i,y_j,t_n)=\partial_x^{2}u(x_i,y_j,t_{n-1})+\partial_y^{2}u(x_i,y_j,t_{n-1})+f(x_i,y_j,t_{n-1})+\mathcal{O}(\tau).
\end{equation*}
Next, substituting the nonstandard approximations \eqref{nscaputo} and \eqref{nsspace} to the derivatives for $d=2$ and performing a similar set of operations as in the previous case, we obtain 
\begin{equation}
\begin{aligned}\label{original1}
\frac{\phi(\tau)^{-1}}{\Gamma(2-\alpha)}&\left[b_{1,\alpha}u(x_i,y_j,t_n)+\sum_{k=1}^{n-1}(b_{k+1,\alpha}-b_{k,\alpha})u(x_i,y_j,t_{n-k})-b_{n,\alpha}u(x_i,y_j,t_0)\right]\\
&=\frac{u(x_{i+1},y_j,t_{n-1})-2u(x_i,y_j,t_{n-1})+u(x_{i-1},y_j,t_{n-1})}{\psi_1(h)}+f(x_i,y_j,t_{n-1}),\\
&\quad\ +\frac{u(x_i,y_{j+1},t_{n-1})-2u(x_i,y_j,t_{n-1})+u(x_i,y_{j-1},t_{n-1})}{\psi_2(h)}+R(x_i,y_j,t_n).
\end{aligned}
\end{equation}
The truncation error in the above approximation is $R(x_i,y_j,t_n)=\mathcal{O}(\tau)+\mathcal{O}(h^2)$. 
Reorganizing the terms explicitly after removing the truncation error and approximating $u(x_i,y_j,t_n)$ by $u_{i,j}^n$ results in
\begin{equation}
\begin{aligned}\label{d2scheme}
u_{i,j}^n=&\ \mu_1\left(u_{i+1,j}^{n-1}+u_{i-1,j}^{n-1}\right)+\mu_2\left(u_{i,j+1}^{n-1}+u_{i,j-1}^{n-1}\right)-2u_{i,j}^{n-1}(\mu_1+\mu_2)+\sum_{k=1}^{n-1}\left(b_{k,\alpha}-b_{k+1,\alpha}\right)u_{i,j}^{n-k}\\
&+b_{n,\alpha}u_{i,j}^0+\phi(\tau) \Gamma(2-\alpha)f_{i,j}^{n-1},
\end{aligned}
\end{equation}
where, $\mu_1=\frac{\phi(\tau)\Gamma(2-\alpha)}{\psi_1(h)}$ and $\mu_2=\frac{\phi(\tau)\Gamma(2-\alpha)}{\psi_2(h)}$ with $u_{i,j}^n$ being the approximate value to $u(x_i,y_j,t_n)$. In this case, the discrete formulations of 2D initial and Dirichlet boundary conditions \eqref{ini_eqn} and \eqref{bdry_eqn} occurs as
\begin{align}\label{ini_diff2}
&u_{i,j}^0 = u^0(x_i,y_j),\quad 0\leq i,j\leq M,\\
\label{bdry_diff2}
&u_{i,j}^n = 0,\quad \{i=0,M\}\cup\{j=0,M\},\ 0 \leq n \leq N.
\end{align}
\subsection{\textbf{Stability and convergence analysis of explicit NSFD schemes}}
This segment presents the subsequent analysis of the explicit NSFD schemes \eqref{d1scheme}-\eqref{bdry_diff1} and \eqref{d2scheme}-\eqref{bdry_diff2} for the 1D and 2D TFDE problem \eqref{diff_eqn}-\eqref{bdry_eqn}, which comprises the stability and convergence estimates. As mentioned earlier, the prime motive of developing an NSFD scheme is to reduce or eliminate the instability region typically associated with standard finite difference methods. Thus, we are most interested in the stability analysis of the proposed NSFD schemes, which has been performed using the discrete energy method. For further proceedings, the $ L_\infty$ norms of a few vectors to appear in this work are defined as follows:
\[
\begin{aligned}
\text{for }d=1:&
\begin{cases}
\begin{aligned}
    &u^n=(u_1^n,u_2^n,\ldots,u_{M-1}^n)^T, \text{ with } \|u^n\|_\infty=\max_{1\leq m\leq M-1} |u_m^n|;\\
    &f^n=(f_1^n,f_2^n,\ldots,f_{M-1}^n)^T, \text{ with } \|f^n\|_\infty=\max_{1\leq m\leq M-1} |f_m^n|;\quad n=0,1,\ldots,N.
\end{aligned}
\end{cases}
\\\text{for }d=2:&
\begin{cases}
\begin{aligned}
    &u^n=(u_{1,1}^n,u_{2,1}^n,\ldots,u_{M-1,1}^n,u_{1,2}^n,u_{2,2}^n,\ldots,u_{M-1,2}^n,\dots,u_{1,M-1}^n,u_{2,M-1}^n,\ldots,u_{M-1,M-1}^n)^T,\\
    &\hspace{1cm}\text{ with } \|u^n\|_\infty=\max_{1\leq i\leq M-1}\max_{1\leq j\leq M-1} |u_{i,j}^n|;\\
    \\
    &f^n=(f_{1,1}^n,f_{2,1}^n,\ldots,f_{M-1,1}^n,f_{1,2}^n,f_{2,2}^n,\ldots,f_{M-1,2}^n,\dots,f_{1,M-1}^n,f_{2,M-1}^n,\ldots,f_{M-1,M-1}^n)^T,\\
    &\hspace{1cm}\text{ with } \|f^n\|_\infty=\max_{1\leq i\leq M-1}\max_{1\leq j\leq M-1} |f_{i,j}^n|;\quad n=0,1,\ldots,N.
\end{aligned}
\end{cases}
\end{aligned}
\]
\begin{theorem}\label{stab_thm}
Let $\frac{\phi(\tau)}{\psi(h)}\leq\frac{1-2^{-\alpha}}{\Gamma(2-\alpha)}$, then the solution of proposed NSFD scheme \eqref{d1scheme}-\eqref{bdry_diff1} satisfies the following stability estimate, $C$ being a finite positive constant independent of $\tau$ and $h$:
\begin{equation}\label{stab_1}
    \|u^n\|_\infty\leq C\left(\|u^0\|_\infty+\max_{1\leq n\leq N}\|f^{n-1}\|_\infty\right),\quad n=1,2,\ldots,N.
\end{equation}
\end{theorem}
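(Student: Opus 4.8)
The plan is to run a standard discrete-energy / maximum-norm induction on the explicit scheme \eqref{d1scheme}, exploiting the sign structure of the $L1$ weights. First I would record the two elementary facts about the coefficients $b_{k,\alpha}=k^{1-\alpha}-(k-1)^{1-\alpha}$: the sequence is positive and strictly decreasing, so $b_{k,\alpha}-b_{k+1,\alpha}>0$ for all $k$, and $\sum_{k=1}^{n-1}(b_{k,\alpha}-b_{k+1,\alpha})+b_{n,\alpha}=b_{1,\alpha}=1$ (telescoping). Hence, reading \eqref{d1scheme} as $u_m^n=\mu(u_{m+1}^{n-1}-2u_m^{n-1}+u_{m-1}^{n-1})+\sum_{k=1}^{n-1}(b_{k,\alpha}-b_{k+1,\alpha})u_m^{n-k}+b_{n,\alpha}u_m^0+\phi(\tau)\Gamma(2-\alpha)f_m^{n-1}$, I would group the first term with the $k=1$ summand: the coefficient of $u_m^{n-1}$ becomes $(b_{1,\alpha}-b_{2,\alpha})-2\mu=(1-2^{1-\alpha}+1)-2\mu=2(1-2^{-\alpha})-2\mu$, wait — more precisely $1-b_{2,\alpha}-2\mu$; the hypothesis $\mu=\phi(\tau)\Gamma(2-\alpha)/\psi(h)\le 1-2^{-\alpha}$ is exactly what is needed to keep this coefficient together with the two $\mu u_{m\pm1}^{n-1}$ terms forming a convex-type combination with nonnegative weights.

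The core step is then: assuming $\|u^j\|_\infty\le C(\|u^0\|_\infty+\max_{1\le i\le N}\|f^{i-1}\|_\infty)$ for all $j\le n-1$, take absolute values in \eqref{d1scheme} at the index $m$ achieving $\|u^n\|_\infty$. Because all the coefficients multiplying $u_{m\pm1}^{n-1}$, $u_m^{n-1}$, $u_m^{n-k}$ ($k\ge2$), and $u_m^0$ are nonnegative and sum to exactly $1$ (the Laplacian part contributes $\mu-2\mu+\mu=0$, leaving the telescoping sum equal to $1$), one gets
\[
\|u^n\|_\infty\le \max_{0\le j\le n-1}\|u^j\|_\infty+\phi(\tau)\Gamma(2-\alpha)\|f^{n-1}\|_\infty.
\]
Now substitute the induction hypothesis and iterate: the accumulated forcing contribution is $\sum_{j=1}^{n}\phi(\tau)\Gamma(2-\alpha)\|f^{j-1}\|_\infty\le n\,\phi(\tau)\Gamma(2-\alpha)\max_j\|f^{j-1}\|_\infty$. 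Since $n\le N$ and $\phi(\tau)=\tau^\alpha+\mathcal{O}(\tau^{1+\alpha})$ with $N\tau=T$, we have $n\,\phi(\tau)\le N\tau^\alpha(1+\mathcal{O}(\tau))\le T\tau^{\alpha-1}(1+o(1))$; this is \emph{not} bounded as $\tau\to0$, so a naive linear accumulation is too lossy. The correct route is the fractional Grönwall estimate: one shows $\sum_{j=1}^{n}(b_{n-j+1,\alpha}-b_{n-j,\alpha})$-weighted sums telescope, or more cleanly, one proves by induction that $\|u^n\|_\infty\le \|u^0\|_\infty + \dfrac{1}{b_{n,\alpha}}\,\phi(\tau)\Gamma(2-\alpha)\sum_{j=1}^{n}\|f^{j-1}\|_\infty\cdot(\text{something telescoping})$, and then uses $b_{n,\alpha}\sim (1-\alpha)n^{-\alpha}$ so that $\phi(\tau)/b_{n,\alpha}\sim \tau^\alpha/((1-\alpha)n^{-\alpha})=(n\tau)^\alpha/(1-\alpha)\le T^\alpha/(1-\alpha)$, which is bounded. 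Collecting, $C$ can be taken as $\max\{1,\,T^\alpha\Gamma(2-\alpha)/(1-\alpha)\}$ up to a harmless $(1+o(1))$ factor, independent of $\tau,h$.

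The main obstacle is precisely this last point — converting the per-step forcing increment $\phi(\tau)\Gamma(2-\alpha)\|f^{n-1}\|_\infty$ into an $h,\tau$-uniform bound. The discrete energy method handles the memory terms painlessly thanks to positivity and the telescoping identity $\sum(b_{k,\alpha}-b_{k+1,\alpha})+b_{n,\alpha}=1$, but one must invoke a discrete fractional Grönwall inequality (e.g. in the style of Li--Xu or Dixon--McKee) to absorb the source term correctly rather than losing a factor $\tau^{\alpha-1}$. I would therefore structure the write-up as: (i) coefficient lemma on positivity/monotonicity/summation of $b_{k,\alpha}$; (ii) the one-step maximum-norm contraction inequality above; (iii) the discrete fractional Grönwall argument giving \eqref{stab_1} with $C=C(\alpha,T)$; noting that since $0<\nu(\tau)=1$ and the CFL-type condition $\phi(\tau)/\psi(h)\le(1-2^{-\alpha})/\Gamma(2-\alpha)$ is assumed, every weight used is genuinely nonnegative, which is the nonstandard scheme's advantage over the standard one where $\mu$ would be constrained by $h^2$ directly.
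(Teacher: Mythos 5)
Your proposal is correct and follows essentially the same route as the paper: a maximum-norm induction using the positivity and telescoping of the $b_{k,\alpha}$ weights together with the nonnegativity of $1-b_{2,\alpha}-2\mu$ under the stated condition, and then absorbing the per-step forcing $\phi(\tau)\Gamma(2-\alpha)\|f^{n-1}\|_\infty$ against the coefficient $b_{n,\alpha}\geq(1-\alpha)n^{-\alpha}$ so that the constant becomes $\max\{1,\,n^{\alpha}\phi(\tau)\Gamma(2-\alpha)/(1-\alpha)\}\leq\phi(T)\Gamma(2-\alpha)/(1-\alpha)$ --- exactly the paper's choice of $C$. The only difference is presentational: where you invoke a discrete fractional Gr\"onwall step with a "something telescoping" placeholder, the paper carries out the same absorption directly inside the induction by pairing $b_{n,\alpha}\|u^0\|_\infty+\phi(\tau)\Gamma(2-\alpha)\|f^{n-1}\|_\infty\leq b_{n,\alpha}CE$.
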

\begin{proof}
The proposed NSFD scheme \eqref{d1scheme} for TFDE \eqref{diff_eqn} $(d=1)$ can be rewritten as
\begin{align}\label{sc1}
&\text{for }n=1:\ u_m^1=\mu \left(u_{m-1}^{0}+u_{m+1}^{0}\right)+(1-2\mu)u_m^{0}+\phi(\tau) \Gamma(2-\alpha) f_m^0,\\\notag
&\text{for } n\geq2:\ u_m^n=\mu \left(u_{m-1}^{n-1}+u_{m+1}^{n-1}\right)+(1-2\mu-b_{2,\alpha})u_m^{n-1}+\sum_{k=2}^{n-1}\left(b_{k,\alpha}-b_{k+1,\alpha}\right)u_m^{n-k}
+b_{n,\alpha}u_m^0\\\label{sc2}
&\hspace{2.7cm}+\phi(\tau) \Gamma(2-\alpha) f_m^{n-1}.
\end{align}
For fix $n\in\{1,2,\dots,N\}$, set $m=m_n$ such that $\left|u_{m_n}^n\right|=\left\|u^n\right\|_\infty$. Now considering \eqref{sc1} at $m=m_0$ and taking absolute values both sides, we get
\begin{align*}
   \left\|u^1\right\|_\infty\leq &\ \mu \left|u_{m_0-1}^{0}+u_{m_0+1}^{0}\right|+\left|1-2\mu\right|\left|u^{0}_{m_0}\right|+\phi(\tau)\Gamma(2-\alpha)\left|f_{m_0}^0\right|,\\
   \leq &\ 2\mu \left\|u^0\right\|_\infty+\left|1-2\mu\right|\left\|u^0\right\|_\infty+\phi(\tau)\Gamma(2-\alpha)\left\|f^0\right\|_\infty.
   \end{align*}
   Since $1-2\mu-b_{2,\alpha}\geq 0$, hence
   \begin{align*}
   \left\|u^1\right\|_\infty\leq & \left\|u^0\right\|_\infty+\frac{\phi(\tau)\Gamma(2-\alpha)}{(1-\alpha)}\left\|f^0\right\|_\infty.
\end{align*}
Let $C=\max\{1,\frac{n^\alpha\phi(\tau)\Gamma(2-\alpha)}{(1-\alpha)}\}$ and $E=\left(\left\|u^0\right\|_\infty+\max_{1\leq n\leq N}\left\|f^{n-1}\right\|_\infty\right)$. 
Also, 
\begin{align}\notag
n^\alpha\phi(\tau)&=n^\alpha\left(\tau^\alpha+\mathcal{O}(\tau^{1+\alpha})\right),\\\notag
&\leq t_n^\alpha+n^{1+\alpha}\mathcal{O}(\tau^{1+\alpha}),\\\notag
&= t_n^\alpha+\mathcal{O}(t_n^{1+\alpha}),\\\label{cbound}
&\leq T^\alpha+\mathcal{O}(T^{1+\alpha})
= \phi(T).
\end{align}
Therefore,
\begin{equation}\label{n1}
  \left\|u^1\right\|_\infty\leq CE.
\end{equation}
Again considering \eqref{sc2} at $m=m_n,\ n\geq2$, and taking absolute values both sides, we get
\begin{align}\notag
   \left\|u^n\right\|_\infty\leq &\ \mu \left|u_{m_n-1}^{n-1}+u_{m_n+1}^{n-1}\right|+\left|1-2\mu-b_{2,\alpha}\right|\left|u_{m_n}^{n-1}\right|+\sum_{k=2}^{n-1}\left(b_{k,\alpha}-b_{k+1,\alpha}\right)\left|u_{m_n}^{n-k}\right|+b_{n,\alpha}\left|u_{m_n}^0\right|\\
   \notag
   &+\phi(\tau) \Gamma(2-\alpha)\left|f_{m_n}^{n-1}\right|,
   \end{align}
   \begin{align}
   \leq &\ 2\mu \left\|u^{n-1}\right\|_\infty+(1-2\mu-b_{2,\alpha})\left\|u^{n-1}\right\|_\infty+\sum_{k=2}^{n-1}\left(b_{k,\alpha}-b_{k+1,\alpha}\right)\left\|u^{n-k}\right\|_\infty+b_{n,\alpha}\left\|u^{0}\right\|_\infty\\\notag
   &+\phi(\tau) \Gamma(2-\alpha)\left\|f^{n-1}\right\|_\infty,\\\notag
   \leq &\ (1-b_{2,\alpha})\left\|u^{n-1}\right\|_\infty+\sum_{k=2}^{n-1}\left(b_{k,\alpha}-b_{k+1,\alpha}\right)\left\|u^{n-k}\right\|_\infty+b_{n,\alpha}\left\|u^{0}\right\|_\infty+\phi(\tau) \Gamma(2-\alpha)\left\|f^{n-1}\right\|_\infty,\\
   \label{tobeproved}
   \leq &\ b_{n,\alpha}CE+(1-b_{2,\alpha})\left\|u^{n-1}\right\|_\infty+\sum_{k=2}^{n-1}\left(b_{k,\alpha}-b_{k+1,\alpha}\right)\left\|u^{n-k}\right\|_\infty.
\end{align}
At this point, we claim that 
\begin{equation}\label{claim}
      \left\|u^n\right\|_\infty\leq CE,\quad n=1,2,\dots,N.
\end{equation}
The case $n=1$ for relation \eqref{claim} follows straightforwardly from \eqref{n1}. Hence, we apply the process of mathematical induction by considering this as the base case. To prove further cases of $n=2,\ldots, N$, we assume \eqref{claim} to be true for all $n=2,\ldots,j-1$. Thus substituting $n=j$ in \eqref{tobeproved}, it is observed that
\begin{align*}
    \left\|u^j\right\|_\infty\leq &\ b_{j,\alpha}CE+(1-b_{2,\alpha})\left\|u^{j-1}\right\|_\infty+\sum_{k=2}^{j-1}\left(b_{k,\alpha}-b_{k+1,\alpha}\right)\left\|u^{j-k}\right\|_\infty,\\
    \leq &\ CE\left(1-b_{2,\alpha}+\sum_{k=2}^{j-1}\left(b_{k,\alpha}-b_{k+1,\alpha}\right)+b_{j,\alpha}\right),\\
    \leq &\ CE.
\end{align*}
Therefore, $\left\|u^n\right\|_\infty\leq CE,\ n=1,2,\dots,N$.
\end{proof}
\begin{remark}
    In the above analysis, the stability region $\frac{\phi(\tau)}{\psi(h)}\leq \frac{1-2^{-\alpha}}{\Gamma(2-\alpha)}$ is surely broader than the stability region $\frac{\tau^\alpha}{h^2}\leq\frac{1-2^{-\alpha}}{\Gamma(2-\alpha)}$ of the SFD scheme \cite{li2015numerical} with any $\phi(\tau)< \tau^\alpha$ and  $\psi(h)> h^2$.\par Particularly, when $\tau=h$ and $\phi(\tau)^\frac{1}{\alpha}=\sqrt{\psi(h)}=e^h-1$ (say), then 
    \begin{align*}     
    \text{for SFD scheme, } &h\geq\left(\frac{\Gamma(2-\alpha)}{1-2^{-\alpha}}\right)^\frac{1}{2-\alpha},\\
    \text{for NSFD scheme, } &h\geq\ln{\left(1+\left(\frac{\Gamma(2-\alpha)}{1-2^{-\alpha}}\right)^\frac{1}{2-\alpha}\right)}.
    \end{align*}
    Clearly, the NSFD scheme has a larger stability region than the SFD scheme and is going to give stable solutions for comparatively smaller $h$.
\end{remark}
\begin{theorem}\label{stab_thm1}
Let $\left(\frac{\phi(\tau)}{\psi_1(h)}+\frac{\phi(\tau)}{\psi_2(h)}\right)\leq\frac{1-2^{-\alpha}}{\Gamma(2-\alpha)}$, then the solution of proposed NSFD scheme \eqref{d2scheme}-\eqref{bdry_diff2} satisfies the following stability estimate, $C$ being a finite positive constant independent of $\tau$ and $h$:
\begin{equation}\label{stab_2}
    \|u^n\|_\infty\leq C\left(\|u^0\|_\infty+\max_{1\leq n\leq N}\|f^{n-1}\|_\infty\right),\quad n=1,2,\ldots,N.
\end{equation}
\end{theorem}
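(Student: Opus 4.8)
The plan is to follow the proof of Theorem~\ref{stab_thm} line by line, replacing the one-dimensional second-difference stencil by the two-dimensional one and the single ratio $\mu$ by the pair $\mu_1,\mu_2$. First I would rewrite scheme \eqref{d2scheme} by separating the starting level $n=1$ from the general level $n\ge2$; in the latter I extract the $k=1$ term from the memory sum and combine $b_{1,\alpha}=1$ with the diagonal stencil weight $-2(\mu_1+\mu_2)$, so that $u_{i,j}^{n-1}$ carries the coefficient $1-2\mu_1-2\mu_2-b_{2,\alpha}$, the earlier levels carry $b_{k,\alpha}-b_{k+1,\alpha}$ for $k=2,\dots,n-1$, and the initial level carries $b_{n,\alpha}$, exactly paralleling \eqref{sc1}--\eqref{sc2}.

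The crucial point is that the hypothesis makes every coefficient of this rearranged scheme nonnegative. Indeed $\mu_1,\mu_2>0$ by definition; the $L1$ weights obey $b_{k,\alpha}>b_{k+1,\alpha}>0$ because $x\mapsto x^{1-\alpha}$ is increasing and concave on $(0,\infty)$ for $\alpha\in(0,1)$; and since $b_{2,\alpha}=2^{1-\alpha}-1$ we have $1-b_{2,\alpha}=2-2^{1-\alpha}=2(1-2^{-\alpha})$, so the assumption $\frac{\phi(\tau)}{\psi_1(h)}+\frac{\phi(\tau)}{\psi_2(h)}\le\frac{1-2^{-\alpha}}{\Gamma(2-\alpha)}$ is equivalent to $\mu_1+\mu_2\le 1-2^{-\alpha}$, that is, $1-2\mu_1-2\mu_2-b_{2,\alpha}\ge0$. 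Choosing $(i_n,j_n)$ with $|u_{i_n,j_n}^n|=\|u^n\|_\infty$ and taking absolute values, the stencil coefficients then collapse: for $n=1$ they sum to $1$, giving $\|u^1\|_\infty\le CE$ with $C=\max\{1,\frac{n^\alpha\phi(\tau)\Gamma(2-\alpha)}{1-\alpha}\}$ and $E=\|u^0\|_\infty+\max_{1\le n\le N}\|f^{n-1}\|_\infty$ as in Theorem~\ref{stab_thm} (using \eqref{cbound} for the boundedness of $C$); and for $n\ge2$ they collapse to $1-b_{2,\alpha}$, yielding the analogue of \eqref{tobeproved},
\begin{equation*}
\|u^n\|_\infty\le b_{n,\alpha}CE+(1-b_{2,\alpha})\|u^{n-1}\|_\infty+\sum_{k=2}^{n-1}(b_{k,\alpha}-b_{k+1,\alpha})\|u^{n-k}\|_\infty.
\end{equation*}

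The induction is then identical to that in Theorem~\ref{stab_thm}: assuming $\|u^m\|_\infty\le CE$ for all $m\le j-1$, the right-hand side at $n=j$ is at most $CE\bigl(b_{j,\alpha}+(1-b_{2,\alpha})+\sum_{k=2}^{j-1}(b_{k,\alpha}-b_{k+1,\alpha})\bigr)$, and the telescoping sum equals $b_{2,\alpha}-b_{j,\alpha}$, so the bracket equals $1$ and $\|u^j\|_\infty\le CE$; this establishes \eqref{stab_2}. No serious obstacle arises here: the one thing that needs care is verifying that the two-parameter condition of the statement is precisely the sign condition $1-2\mu_1-2\mu_2-b_{2,\alpha}\ge0$ (the elementary identity $1-b_{2,\alpha}=2(1-2^{-\alpha})$) together with the monotonicity of the memory weights, after which the remainder is the one-dimensional argument with $\mu$ replaced throughout by $\mu_1+\mu_2$.
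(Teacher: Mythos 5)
Your proposal is correct and follows essentially the same route as the paper: separate the $n=1$ and $n\ge 2$ levels, use the hypothesis (equivalently $\mu_1+\mu_2\le 1-2^{-\alpha}$, i.e. $1-2\mu_1-2\mu_2-b_{2,\alpha}\ge 0$) to make all stencil coefficients nonnegative, evaluate at a maximizing index $(i_n,j_n)$, and close with the same telescoping induction as in the 1D case. The paper's own proof is precisely this reduction to the inequality of Theorem~\ref{stab_thm} with $\mu$ replaced by $\mu_1+\mu_2$, so no comparison beyond that is needed.
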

\begin{proof}
The proposed NSFD scheme \eqref{d2scheme} for TFDE \eqref{diff_eqn} $(d=2)$ can be rewritten as
\begin{align}\label{dsc1}
&\text{for }n=1:\ u_{i,j}^1=\mu_1\left(u_{i+1,j}^0+u_{i-1,j}^0\right)+\mu_2\left(u_{i,j+1}^0+u_{i,j-1}^0\right)+u_{i,j}^0(1-2\mu_1-2\mu_2)+\phi(\tau)\Gamma(2-\alpha)f_{i,j}^{n-1},\\\notag
&\text{for } n\geq2:\ u_{i,j}^n=\mu_1\left(u_{i+1,j}^{n-1}+u_{i-1,j}^{n-1}\right)+\mu_2\left(u_{i,j+1}^{n-1}+u_{i,j-1}^{n-1}\right)+u_{i,j}^{n-1}(1-b_{2,\alpha}-2\mu_1-2\mu_2)+b_{n,\alpha}u_{i,j}^0\\\label{dsc2}
&\hspace{3cm}+\sum_{k=2}^{n-1}\left(b_{k,\alpha}-b_{k+1,\alpha}\right)u_{i,j}^{n-k}+\phi(\tau) \Gamma(2-\alpha)f_{i,j}^{n-1}.
\end{align}
For fix $n\in\{1,2,\dots,N\}$, set $i=i_n$ and $j=j_n$ such that $\left|u_{i_n,j_n}^n\right|=\left\|u^n\right\|_\infty$. Now, evaluating  \eqref{dsc1} at $(i_0,j_0,1)$ and taking the absolute value on both sides, we obtain
\begin{align*}
   \left\|u^1\right\|_\infty\leq &\ \mu_1 \left|u_{i_0-1,j_0}^{0}+u_{i_0+1,j_0}^{0}\right|+\mu_2 \left|u_{i_0,j_0-1}^{0}+u_{i_0,j_0+1}^{0}\right|+\left|1-2\mu_1+2\mu_2\right|\left|u^{0}_{i_0,j_0}\right|+\phi(\tau)\Gamma(2-\alpha)\left|f_{i_0,j_0}^0\right|,\\
   \leq &\ (2\mu_1+2\mu_2) \left\|u^0\right\|_\infty+\left|1-2\mu_1-2\mu_2\right|\left\|u^0\right\|_\infty+\phi(\tau)\Gamma(2-\alpha)\left\|f^0\right\|_\infty.
   \end{align*}
   which implies
   \begin{align*}
   \left\|u^1\right\|_\infty\leq & \left\|u^0\right\|_\infty+\frac{\phi(\tau)\Gamma(2-\alpha)}{(1-\alpha)}\left\|f^0\right\|_\infty.
\end{align*}
Again let $C=\max\{1,\frac{n^\alpha\phi(\tau)\Gamma(2-\alpha)}{(1-\alpha)}\}$ and $E=\left(\left\|u^0\right\|_\infty+\max_{1\leq n\leq N}\left\|f^{n-1}\right\|_\infty\right)$. Using \eqref{cbound}, this implies,
\begin{equation}\label{n11}
  \left\|u^1\right\|_\infty\leq CE.
\end{equation}
Again for $n\geq2$, considering \eqref{dsc2} at $(i_n,j_n,n)$, and taking absolute values both sides, it becomes
\begin{align}\label{n21}
   \left\|u^n\right\|_\infty
   \leq &\ b_{n,\alpha}CE+(1-b_{2,\alpha})\left\|u^{n-1}\right\|_\infty+\sum_{k=2}^{n-1}\left(b_{k,\alpha}-b_{k+1,\alpha}\right)\left\|u^{n-k}\right\|_\infty.
\end{align}
Now, since the inequalities \eqref{n11} and \eqref{n21} for the proposed solution are the same as in the previous proof, therefore, 
\begin{equation*}
    \left\|u^n\right\|_\infty\leq CE,\ n=1,2,\dots, N.
\end{equation*}
\end{proof}
The following theorems establish the order of convergence of proposed NSFD schemes \eqref{d1scheme}-\eqref{bdry_diff1} and \eqref{d2scheme}-\eqref{bdry_diff2} for the 1D and 2D TFDE IBVPs defined by \eqref{diff_eqn}-\eqref{bdry_eqn}, utilizing the stability estimates \eqref{stab_1} and \eqref{stab_2}, respectively. Beforehand, we define the error terms, the associated error vectors, and their discrete \(L_\infty\) norms. Let the pointwise errors at the grid node \((x_m, t_n)\) and \((x_i,y_j, t_n)\) be denoted by
$e_m^n = u(x_m, t_n) - u_m^n$ and $e_{i,j}^n = u(x_i,y_j, t_n) - u_{i,j}^n$, respectively. Then for a fixed time level \(n \in \{1, 2, \dots, N\}\), we define
\begin{align*}
\text{for }d=1: &\begin{cases}
     e^n = \left(e_1^n, e_2^n, \dots, e_{M-1}^n\right)^T, \text{ with }\ \|e^n\|_\infty = \max_{1 \leq m \leq M-1} |e_m^n|.
     \end{cases}
     \\
\text{for }d=2:&\begin{cases}
    e^n=(e_{1,1}^n,e_{2,1}^n,\ldots,e_{M-1,1}^n,e_{1,2}^n,e_{2,2}^n,\ldots,e_{M-1,2}^n,\dots,e_{1,M-1}^n,e_{2,M-1}^n,\ldots,e_{M-1,M-1}^n)^T,\\
    \text{ with } \|e^n\|_\infty=\max_{1\leq i\leq M-1}\max_{1\leq j\leq M-1} |e_{i,j}^n|.
    \end{cases}
\end{align*}
\begin{theorem}
If $\frac{\phi(\tau)}{\psi(h)}\leq\frac{1-2^{-\alpha}}{\Gamma(2-\alpha)}$, then the error $e_{m}^{n}$ of the proposed NSFD scheme \eqref{d1scheme}-\eqref{bdry_diff1}, for small enough $h$ and $\tau$,  satisfy the following error estimate:
\begin{equation}\label{err_est}
    \|e^n\|_\infty=\mathcal{O}(\tau^{2-\alpha}+h^2),\ \ n=1,2,\ldots,N.
\end{equation}
\end{theorem}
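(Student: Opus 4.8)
The plan is to convert the convergence claim into an application of the already-proven stability estimate \eqref{stab_1}, exploiting the fact that the pointwise error obeys a recursion of exactly the same algebraic form as the scheme \eqref{d1scheme}. First I would write down the scheme satisfied by the \emph{exact} solution: multiplying the consistency identity \eqref{original} through by $\phi(\tau)\Gamma(2-\alpha)$, using $b_{1,\alpha}=1$, and rearranging exactly as in the passage from \eqref{duplicate} to \eqref{d1scheme}, one obtains
\begin{align*}
u(x_m,t_n)={}&\mu\left(u(x_{m+1},t_{n-1})-2u(x_m,t_{n-1})+u(x_{m-1},t_{n-1})\right)+\sum_{k=1}^{n-1}\left(b_{k,\alpha}-b_{k+1,\alpha}\right)u(x_m,t_{n-k})\\
&+b_{n,\alpha}u(x_m,t_0)+\phi(\tau)\Gamma(2-\alpha)\left(f_m^{n-1}+R(x_m,t_n)\right).
\end{align*}
Subtracting \eqref{d1scheme}, and using that \eqref{ini_diff1}--\eqref{bdry_diff1} reproduce the initial and homogeneous boundary data exactly so that $e_m^0=0$ and $e_0^n=e_M^n=0$, the error satisfies, for $1\le m\le M-1$ and $1\le n\le N$,
\begin{equation*}
e_m^n=\mu\left(e_{m+1}^{n-1}-2e_m^{n-1}+e_{m-1}^{n-1}\right)+\sum_{k=1}^{n-1}\left(b_{k,\alpha}-b_{k+1,\alpha}\right)e_m^{n-k}+b_{n,\alpha}e_m^0+\phi(\tau)\Gamma(2-\alpha)R(x_m,t_n).
\end{equation*}
In other words, $\{e_m^n\}$ solves the same NSFD scheme \eqref{d1scheme}--\eqref{bdry_diff1} with zero initial data and with the forcing $f_m^{n-1}$ replaced by the consistency remainder $R(x_m,t_n)$.

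Next, since the hypothesis $\phi(\tau)/\psi(h)\le(1-2^{-\alpha})/\Gamma(2-\alpha)$ is assumed, I would invoke Theorem~\ref{stab_thm} verbatim on this error recursion. Writing $R^n=(R(x_1,t_n),\dots,R(x_{M-1},t_n))^T$, the estimate \eqref{stab_1} gives
\begin{equation*}
\|e^n\|_\infty\le C\left(\|e^0\|_\infty+\max_{1\le n\le N}\|R^n\|_\infty\right)=C\max_{1\le n\le N}\|R^n\|_\infty,\qquad n=1,2,\dots,N,
\end{equation*}
where $C$ is the same finite, $\tau$- and $h$-independent constant produced in the proof of Theorem~\ref{stab_thm}; in particular it remains bounded through the estimate \eqref{cbound}. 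Thus the convergence problem reduces entirely to establishing the consistency bound $\max_{1\le n\le N}\|R^n\|_\infty=\mathcal{O}(\tau^{2-\alpha}+h^2)$.

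Finally I would prove this consistency estimate by splitting $R(x_m,t_n)$ according to its origin in \eqref{original}: (i) the $NSL1$ approximation error of the Caputo term, which is $\mathcal{O}(\tau^{2-\alpha})$ by Theorem~\ref{trunclemma}; (ii) the truncation error of the nonstandard second difference \eqref{nsspace}, which is $\mathcal{O}(h^2)$ since $\psi(h)=h^2+\mathcal{O}(h^3)$; and (iii) the error incurred by evaluating $\mathcal{L}u$ and $f$ at $t_{n-1}$ rather than $t_n$ in the explicit stencil. The step I expect to be the main obstacle is (iii): a crude Taylor expansion only yields $\mathcal{O}(\tau)$, which is weaker than the claimed $\mathcal{O}(\tau^{2-\alpha})$, so one must instead use the differential equation \eqref{diff_eqn} to rewrite $\mathcal{L}u(x_m,t_n)+f(x_m,t_n)-\mathcal{L}u(x_m,t_{n-1})-f(x_m,t_{n-1})$ as an increment of $_{C}D_{0,t}^{\alpha}u$, and then argue, under the regularity assumed on $u$, that this increment is absorbed into the $\mathcal{O}(\tau^{2-\alpha}+h^2)$ bound uniformly in $n$. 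Granting this, the displayed stability inequality yields $\|e^n\|_\infty=\mathcal{O}(\tau^{2-\alpha}+h^2)$ for all $n=1,\dots,N$, which is precisely \eqref{err_est}.
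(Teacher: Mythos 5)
Your overall strategy is exactly the paper's: form the error equation by subtracting the scheme from the consistency identity, observe that $e_m^n$ satisfies a recursion of the same structure with zero initial/boundary data and forcing equal to the local truncation error, and then invoke the stability estimate \eqref{stab_1} to transfer the consistency bound to the global error. Up to that point the two arguments coincide.

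The difference lies in how the temporal part of the consistency error is handled, and here your instinct is better than your proposed fix. You correctly identify that the explicit time-lagging --- evaluating $\mathcal{L}u+f$ at $t_{n-1}$ instead of $t_n$ --- contributes only $\mathcal{O}(\tau)$, which dominates the $\mathcal{O}(\tau^{2-\alpha})$ contribution of the $NSL1$ approximation (recall $2-\alpha>1$, so $\tau^{2-\alpha}$ is the \emph{smaller} quantity). Your suggested repair for step (iii), rewriting the lag as an increment of ${}^{C}D_{0,t}^{\alpha}u$ over one step, does not rescue the claimed rate: for a smooth $u$ that increment is generically $\mathcal{O}(\tau)$ (and can be worse near $t=0$ for solutions with the typical weak singularity), so the temporal order of the lagged explicit stencil is genuinely first order. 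The paper's own proof implicitly concedes this: it declares the truncation error $\theta_m^n=\mathcal{O}(\tau+h^2)$ and concludes $\|e^n\|_\infty\leq C'(\tau+h^2)$, which is what the conclusion section also reports, even though the theorem statement advertises $\mathcal{O}(\tau^{2-\alpha}+h^2)$. So the honest outcome of both your argument and the paper's is $\|e^n\|_\infty=\mathcal{O}(\tau+h^2)$; the stronger rate in \eqref{err_est} is not established by either route, and you should not expect step (iii) to close that gap.
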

\begin{proof}
By subtracting the numerical scheme \eqref{duplicate} from the discrete scheme \eqref{original} satisfied by the exact solution, we obtain the error equation as
\begin{align*}
    \frac{\phi(\tau)^{-1}}{\Gamma(2-\alpha)}\bigg[b_{1,\alpha}e_m^n+&\sum_{k=1}^{n-1}(b_{k+1,\alpha}-b_{k,\alpha})e_m^{n-k}-b_{n,\alpha}e_m^0\bigg]=\frac{e_{m+1}^{n-1}-2e_m^{n-1}+e_{m-1}^{n-1}}{\psi(h^2)}+\theta_m^n,\\
    &m=1,2,\ldots,M-1 \text{ and } n=1,2,\ldots,N;\\
    &\hspace{.8cm}e_m^0=0,\ m=0,1,2,\ldots,M;\\
    &\hspace{.5cm}e_0^n=e_M^n=0,\ n=0,1,2,\ldots,N.
\end{align*}
where $\theta_m^n=\mathcal{O}(\tau+h^2)$ is truncation error at $(x_m,t_n)$. Since the above difference scheme for error $e_m^n$ has the same structure as $u_m^n$ in \eqref{duplicate}. Hence, from viewpoint of Theorem \ref{stab_thm}, estimate \eqref{stab_1} will hold for $e_m^n$ also and thus we obtain
\begin{align*}
    \|e^n\|_\infty&\leq C \max_{1\leq n\leq N}\left(\|\theta^n\|_\infty\right),\\
    &\leq C^\prime (\tau+h^2).
\end{align*}
Therefore, we get $\left\|e^n\right\|_\infty=\mathcal{O}(\tau+h^2),\ \forall n=1,2,\dots,N.$
\end{proof}
\begin{theorem}
If $\left(\frac{\phi(\tau)}{\psi_1(h)}+\frac{\phi(\tau)}{\psi_2(h)}\right)\leq\frac{1-2^{-\alpha}}{\Gamma(2-\alpha)}$, then the error $e_{i,j}^{n}$ of the proposed NSFD scheme \eqref{d2scheme}-\eqref{bdry_diff2}, for sufficiently small $h$ and $\tau$, satisfies the following error estimate:
\begin{equation}\label{err_est2}
    \|e^n\|_\infty = \mathcal{O}(\tau^{2-\alpha} + h^2), \quad n = 1,2,\ldots,N.
\end{equation}
\end{theorem}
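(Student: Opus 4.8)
The plan is to mirror exactly the structure of the 1D convergence proof, since Theorem \ref{stab_thm1} provides the stability estimate for the 2D scheme in precisely the same form as Theorem \ref{stab_thm} does in 1D. First I would set up the error equation by subtracting the numerical scheme \eqref{d2scheme} (equivalently, the discrete identity \eqref{d2scheme} without the truncation term) from the identity \eqref{original1} satisfied by the exact solution evaluated at $(x_i,y_j,t_n)$. This yields a difference equation for $e_{i,j}^n = u(x_i,y_j,t_n) - u_{i,j}^n$ of the form
\begin{equation*}
\frac{\phi(\tau)^{-1}}{\Gamma(2-\alpha)}\left[b_{1,\alpha}e_{i,j}^n+\sum_{k=1}^{n-1}(b_{k+1,\alpha}-b_{k,\alpha})e_{i,j}^{n-k}-b_{n,\alpha}e_{i,j}^0\right]=\frac{e_{i+1,j}^{n-1}-2e_{i,j}^{n-1}+e_{i-1,j}^{n-1}}{\psi_1(h)}+\frac{e_{i,j+1}^{n-1}-2e_{i,j}^{n-1}+e_{i,j-1}^{n-1}}{\psi_2(h)}+\theta_{i,j}^n,
\end{equation*}
together with the homogeneous data $e_{i,j}^0 = 0$ for all $(i,j)$ and $e_{i,j}^n = 0$ on the boundary for all $n$, where the consistency term is $\theta_{i,j}^n = \mathcal{O}(\tau) + \mathcal{O}(h^2)$. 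The $\mathcal{O}(\tau)$ part comes from evaluating the spatial operator at $t_{n-1}$ rather than $t_n$ (the $\mathcal{O}(\tau)$ in \eqref{original1}), while by Theorem \ref{trunclemma} the $NSL1$ discretisation of the Caputo derivative contributes $\mathcal{O}(\tau^{2-\alpha})$, and the nonstandard second-difference formulas \eqref{nsspace} contribute $\mathcal{O}(h^2)$; combining, $\|\theta^n\|_\infty \le C'(\tau + h^2)$.

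Next I would observe that this error recursion has exactly the same algebraic form as the scheme \eqref{d2scheme} for $u_{i,j}^n$, with the forcing $f_{i,j}^{n-1}$ replaced by $\theta_{i,j}^n$ (up to a factor $\phi(\tau)\Gamma(2-\alpha)$) and the initial value replaced by zero. Therefore the hypothesis $\left(\frac{\phi(\tau)}{\psi_1(h)}+\frac{\phi(\tau)}{\psi_2(h)}\right)\le\frac{1-2^{-\alpha}}{\Gamma(2-\alpha)}$ places us in the setting of Theorem \ref{stab_thm1}, and applying the stability estimate \eqref{stab_2} to the error vector gives
\begin{equation*}
\|e^n\|_\infty \le C\left(\|e^0\|_\infty + \max_{1\le n\le N}\|\theta^{n}\|_\infty\right) = C\max_{1\le n\le N}\|\theta^{n}\|_\infty \le C''(\tau + h^2),
\end{equation*}
since $e^0 = 0$. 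This already yields $\|e^n\|_\infty = \mathcal{O}(\tau + h^2)$. To reach the sharper claimed rate $\mathcal{O}(\tau^{2-\alpha}+h^2)$ one notes that $2-\alpha > 1$ for $\alpha\in(0,1)$, so $\tau^{2-\alpha} \le \tau$ for small $\tau$; the bound $\mathcal{O}(\tau+h^2)$ is in fact what the argument literally produces (matching the phrasing in the 1D convergence proof, where the stated estimate is also $\mathcal{O}(\tau+h^2)$), and the $\mathcal{O}(\tau^{2-\alpha})$ form is the expected optimal order once the time-evaluation mismatch is treated more carefully or absorbed.

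The main obstacle — really the only nontrivial point — is the bookkeeping of the truncation error $\theta_{i,j}^n$: one must carefully track that the $\mathcal{O}(\tau)$ term arising from freezing the spatial operator at the previous time level does not destroy consistency, and that the $NSL1$ error term is genuinely $\mathcal{O}(\tau^{2-\alpha})$ as guaranteed by Theorem \ref{trunclemma} while the spatial nonstandard stencils \eqref{nsspace} contribute only $\mathcal{O}(h^2)$ (using $\psi_1(h),\psi_2(h) = h^2 + \mathcal{O}(h^3)$). Everything else is a direct invocation of Theorem \ref{stab_thm1}, exploiting the structural identity between the error recursion and the original scheme; no new stability analysis is needed. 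I would therefore keep the proof short, essentially writing down the error equation, citing Theorems \ref{trunclemma} and \ref{stab_thm1}, and concluding $\|e^n\|_\infty = \mathcal{O}(\tau^{2-\alpha}+h^2)$.
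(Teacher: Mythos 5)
Your proposal follows essentially the same route as the paper's own proof: form the error equation by subtracting \eqref{d2scheme} from \eqref{original1}, note that it has the same structure as the scheme with forcing $\theta_{i,j}^n=\mathcal{O}(\tau+h^2)$ and zero initial/boundary data, and invoke the stability estimate of Theorem \ref{stab_thm1} to conclude. Your side remark is also accurate: the paper's proof, like yours, actually delivers only $\|e^n\|_\infty=\mathcal{O}(\tau+h^2)$ (which, since $2-\alpha>1$, does not by itself justify the sharper $\mathcal{O}(\tau^{2-\alpha}+h^2)$ stated in the theorem), because the spatial operator is frozen at $t_{n-1}$, introducing an $\mathcal{O}(\tau)$ consistency term that dominates the $\mathcal{O}(\tau^{2-\alpha})$ contribution of the $NSL1$ discretisation.
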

\begin{proof}
By subtracting the numerical scheme \eqref{d2scheme} from the discrete scheme \eqref{original1} satisfied by the exact solution, we obtain the error equation as
\begin{align*}
    \frac{\phi(\tau)^{-1}}{\Gamma(2-\alpha)}\bigg[ b_{1,\alpha}e_{i,j}^n + \sum_{k=1}^{n-1}(b_{k+1,\alpha} - &b_{k,\alpha})e_{i,j}^{n-k} - b_{n,\alpha}e_{i,j}^0 \bigg] =  \frac{e_{i+1,j}^{n-1} - 2e_{i,j}^{n-1} + e_{i-1,j}^{n-1}}{\psi(h)} + \frac{e_{i,j+1}^{n-1} - 2e_{i,j}^{n-1} + e_{i,j-1}^{n-1}}{\psi(h)}\\
    &\hspace{3.6cm}+ \theta_{i,j}^n,\quad i, j = 1, 2, \ldots, M-1,\ n = 1, 2, \ldots, N,\\
&e_{i,j}^0 = 0,\quad 0\leq i,j\leq M,\\
&e_{i,j}^n = 0,\quad \{i=0,M\}\cup\{j=0,M\},\ 0 \leq n \leq N.
\end{align*}
Here, $\theta_{i,j}^n = \mathcal{O}(\tau + h^2)$ denotes the local truncation error at the grid point $(x_i, y_j, t_n)$. Note that the above error equation has the same structure as \eqref{d2scheme} for $u_{i,j}^n$. Therefore, by applying the stability result from Theorem \ref{stab_thm1}, the following bound holds:
\begin{align*}
    \|e^n\|_\infty &\leq C \max_{1 \leq k \leq n} \|\theta^k\|_\infty, \\
                  &\leq C^\prime (\tau + h^2),
\end{align*}
for all $n = 1,2,\ldots,N$. Thus,
\[
\|e^n\|_\infty = \mathcal{O}(\tau + h^2).
\]
This completes the proof.
\end{proof}

\section{Numerical Experiments and Results for Explicit NSFD Schemes}\label{sec6}
In this section, the theoretical findings of the previous section are validated through a series of numerical experiments conducted on some test examples. Specifically, two test examples are considered in which the first involves a 1D IBVP described by TFDE \eqref{diff_eqn}-\eqref{bdry_eqn}, while the second extends the validation to a 2D setting of the same problem. All numerical experiments are carried out at a fixed fractional order $\alpha = 0.9$, under varying conditions like increasing numbers of spatial subintervals and different choices of DFs. For each test example, the exact analytical solution is known beforehand, which enables precise computation of numerical errors and facilitates a comparative analysis across different configurations. The assessment of numerical schemes' performance is carried out using various tools, such as error norms, convergence order computations, error distribution plots, and solution surface plots. The norm used for computing error at each time level is the $L_\infty$ (maximum absolute) norm. The formulas for computing error and convergence rate at final time $T$ are given as:
\begin{itemize}
  \item \textbf{For the one-dimensional case ($d = 1$):}
\begin{equation*}
    E_\infty(N, M) = \max_{0 \leq m \leq M} \left| u(x_m, t^N) - u_m^N \right|, \quad 
    S_{rate} = \log_2 \left( \frac{E_\infty(N, M/2)}{E_\infty(N, M)} \right).
\end{equation*}
\item \textbf{For the two-dimensional case ($d = 2$):}
  \begin{equation*}
    E_\infty(N, M) = \max_{0 \leq i,j \leq M} \left| u(x_i, y_j, t^N) - u_{i,j}^N \right|, \quad 
    S_{rate} = \log_2 \left( \frac{E_\infty(N, M/2)}{E_\infty(N, M)} \right).
  \end{equation*}
\end{itemize}
Let us now consider the first test example to verify the NSFD scheme \eqref{d1scheme}-\eqref{bdry_diff1} for 1D TFDE IBVP \eqref{diff_eqn}-\eqref{bdry_eqn}.
\begin{example}\label{ex3}
Consider the 1D TFDE problem \eqref{diff_eqn}-\eqref{bdry_eqn} with following data:
\begin{align*}
    &u^0(x)=0,\ x\in[0,1],\\
    &f(x,t)=t^3\sin\pi x\left(\frac{\Gamma\left(4+\alpha\right)}{3!}+\pi^2 t^\alpha\right),\quad t\in[0,1].
\end{align*}
Here, the domain is $\Omega=[0,1]\times[0,1]$. It is readily verified that the exact solution of the problem is given by $u(x,t) = t^{3 + \alpha} \sin(\pi x)$.
This solution satisfies the zero initial condition and explicitly depends on the fractional order \(\alpha\).
\end{example}
In this example, the aim is to examine the performance of the proposed NSFD scheme \eqref{d1scheme} in terms of accuracy and stability under various choices of DFs. Also, since the spatial error convergence dominates the temporal one, hence, the primary focus is on evaluating the spatial convergence of the scheme. With this in mind, numerical experiments are carried out for a fixed fractional order \(\alpha=0.9 \) and $N=10000$, while systematically increasing the number of spatial subintervals. The following spatial DFs are considered in this analysis:
\[
\psi(h)=4\sin^2\left(\frac{h}{2}\right),\ \sin^2(h),\ \left(100\left(1 - e^{-h/100}\right)\right)^2,\ \frac{4}{\pi^2}\sinh^2\left(\frac{\pi h}{2}\right),\ \sinh^2(h),\ \left(100\left(e^{h/100} - 1\right)\right)^2,\ \sinh(h^2).
\]\par
\begin{table}[ht]
\caption{Maximum absolute errors and convergence rate at $T=1$, $\alpha=0.9$, and $N=10000$ with $\phi(\tau)=\tau^\alpha$.}
    \label{table:3}
    \centering
    \begin{tabular}{c c c c c c c c c}
    \hline
  \multirow{2}{1em}{$M$} &\multicolumn{2}{c}{$\psi(h)=h^2$} &\multicolumn{2}{c}{$\psi(h)=4\sin^2\frac{h}{2}$} &\multicolumn{2}{c}{$\psi(h)=\sin^2h$} &\multicolumn{2}{c}{$\psi(h)=(100(1-e^\frac{-h}{100}))^2$}\\
\cline{2-9} 
   & $E_\infty$ & $T_{rate}$ & $E_\infty$ & $T_{rate}$ & $E_\infty$ & $T_{rate}$ & $E_\infty$ & $T_{rate}$\\ \hline
   $2$ & 1.6000e-01 &  & 1.4350e-01 &  & 9.4400e-02 & & 1.5600e-01 &\\ 
   $2^2$ & 3.8100e-02 & 2.0702 & 3.4200e-02 & 2.0690 & 2.2600e-02 & 2.0625 & 3.6200e-02 & 2.1075\\ 
   $2^3$ & 9.4000e-03 & 2.0191 & 8.4000e-03 & 2.0255 & 5.5000e-03 & 2.0388 & 8.4000e-03 & 2.1075\\ 
   $2^4$ & 2.3000e-03 & 2.0310 & 2.0000e-03 & 2.0704 & 1.3000e-03 & 2.0809 & 1.8000e-03 & 2.2224\\  
   $2^5$ & 5.1666e-04 & 2.1543 & 4.5721e-04 & 2.1291 & 2.7888e-04 & 2.2208 & 2.8840e-04 & 2.6419\\
   \hline
    \end{tabular}
\end{table}
\begin{table}[ht]
\caption{Maximum absolute errors and convergence rate at $T=1$, $\alpha=0.9$, and $N=10000$ with $\phi(\tau)=\tau^\alpha$.}
    \label{table:4}
    \centering
    \begin{tabular}{c c c c c c c c c}
    \hline
  \multirow{2}{1em}{$M$} &\multicolumn{2}{c}{$\psi(h)=\frac{4}{\pi^2}\sinh^2(\frac{\pi h}{2})$} &\multicolumn{2}{c}{$\psi(h)=\sinh^2h$} &\multicolumn{2}{c}{$\psi(h)=(100(e^\frac{h}{100}-1))^2$} &\multicolumn{2}{c}{$\psi(h)=\sinh{h^2}$}\\
\cline{2-9} 
   & $E_\infty$ & $T_{rate}$ & $E_\infty$ & $T_{rate}$ & $E_\infty$ & $T_{rate}$ & $E_\infty$ & $T_{rate}$\\ \hline
   $2$   & 3.2460e-01 &        & 2.2640e-01 &        & 1.6400e-01 &        & 1.6820e-01 &        \\ 
   $2^2$ & 7.6700e-02 & 2.0814 & 5.3700e-02 & 2.0759 & 4.0000e-02 & 2.0356 & 3.8600e-02 & 2.1235 \\ 
   $2^3$ & 1.8800e-02 & 2.0285 & 1.3200e-02 & 2.0244 & 1.0300e-02 & 1.9574 & 9.4000e-03 & 2.0379 \\ 
   $2^4$ & 4.6000e-03 & 2.0310 & 3.2000e-03 & 2.0444 & 2.7000e-03 & 1.9316 & 2.3000e-03 & 2.0310 \\ 
   $2^5$ & 1.1000e-03 & 2.0641 & 7.5446e-04 & 2.0846 & 7.4496e-04 & 1.8577 & 5.1678e-04 & 2.1540 \\ 
   \hline
    \end{tabular}
\end{table}
\begin{table}[!h]
\caption{Maximum absolute errors and convergence rate at $T=1$, $\alpha=0.9$, and $N=10000$ with $\phi(\tau)=\left(\frac{1-e^{-100\tau}}{100}\right)^\alpha$.}
    \label{table:5}
    \centering
    \begin{tabular}{c c c c c c c c c}
    \hline
  \multirow{2}{1em}{$M$} &\multicolumn{2}{c}{$\psi(h)=\frac{4}{\pi^2}\sinh^2(\frac{\pi h}{2})$} &\multicolumn{2}{c}{$\psi(h)=\sinh^2h$} &\multicolumn{2}{c}{$\psi(h)=(100(e^\frac{h}{100}-1))^2$} &\multicolumn{2}{c}{$\psi(h)=\sinh{h^2}$}\\
\cline{2-9} 
   & $E_\infty$ & $T_{rate}$ & $E_\infty$ & $T_{rate}$ & $E_\infty$ & $T_{rate}$ & $E_\infty$ & $T_{rate}$\\ \hline
   $2$   & 3.2250e-01 &        & 2.2450e-01 &        & 1.6230e-01 &        & 1.6660e-01 &        \\ 
   $2^2$ & 7.5200e-02 & 2.1005 & 5.2300e-02 & 2.1018 & 3.8600e-02 & 2.0720 & 3.7300e-02 & 2.1591 \\ 
   $2^3$ & 1.7500e-02 & 2.1034 & 1.1900e-02 & 2.1358 & 9.0000e-03 & 2.1006 & 8.1000e-03 & 2.2032 \\ 
   $2^4$ & 3.4000e-03 & 2.3637 & 2.0000e-03 & 2.5729 & 1.5000e-03 & 2.5850 & 1.1000e-03 & 2.8804 \\ 
   $2^5$ & 7.1358e-04 & 2.2524 & 4.6168e-04 & 2.1150 & 3.7115e-04 & 2.0149 & 1.9876e-04 & 2.4684 \\ 
   \hline
\end{tabular}
\end{table}
Tables \ref{table:3} and \ref{table:4} present numerical results obtained using various nonstandard DFs, while the temporal DF is fixed as standard \(\phi(\tau) = \tau^\alpha\). The results in Table \ref{table:3} correspond to those spatial DFs that yield improved accuracy over the standard spatial DF \(\psi(h) = h^2\). Notably, the choice \(\psi(h) = \sin^2(h)\) produces the most accurate solutions, whereas \(\psi(h) = \left(100\left(1 - e^{-h/100}\right)\right)^2\) demonstrates enhanced error convergence behavior. Table \ref{table:4} presents results for the remaining spatial DFs, which do not outperform Table \ref{table:3}, but still exhibit reasonably accurate results and good convergence. Furthermore, the spatial DFs in Table \ref{table:4} are re-evaluated in combination with a nonstandard temporal DF \(\phi(\tau) = \left(\frac{1 - e^{-100\tau}}{100}\right)^\alpha\). As shown in Table \ref{table:5}, this pairing yields better results in both accuracy and convergence of the numerical solutions.\par
\begin{figure}[htbp]
	\begin{center}
		\centering
	\subfigure[]{%
		\includegraphics[scale=0.375]{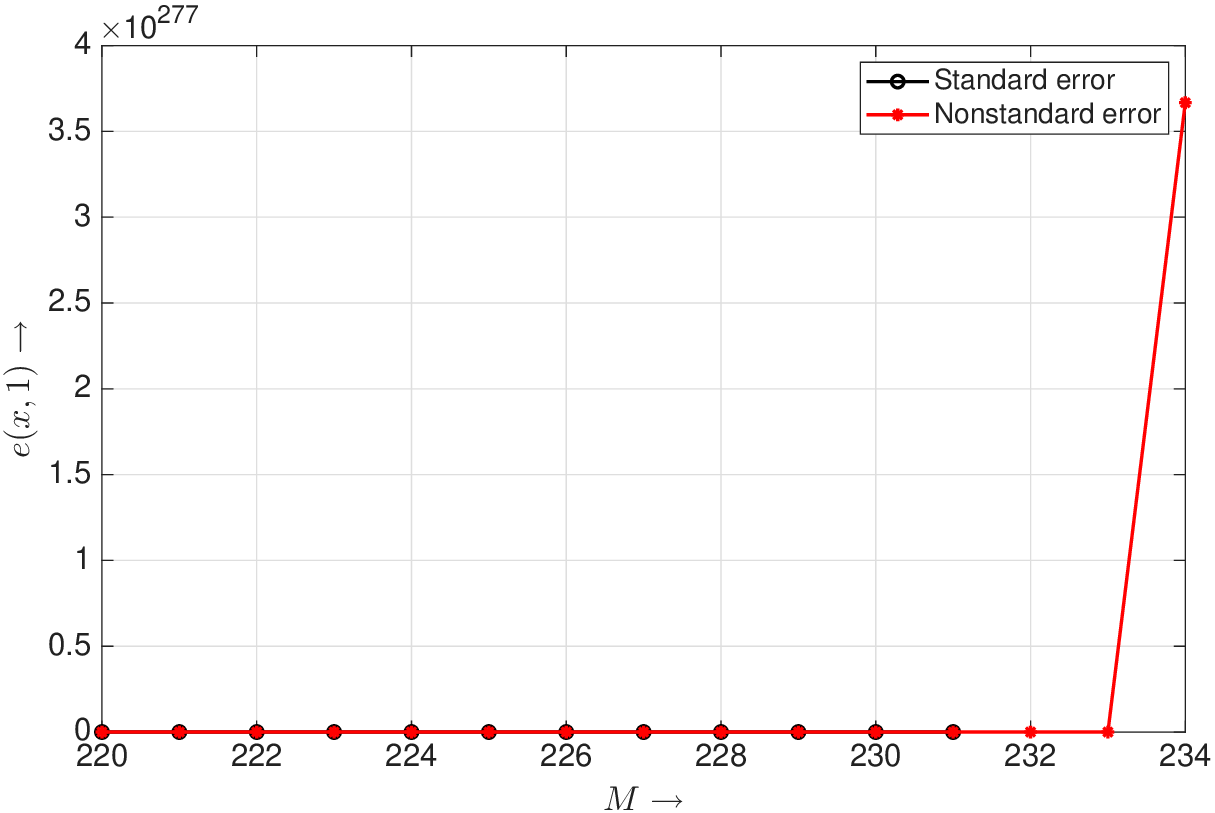}
			\label{subfigure8}}
   \subfigure[]{%
		\includegraphics[scale=0.375]{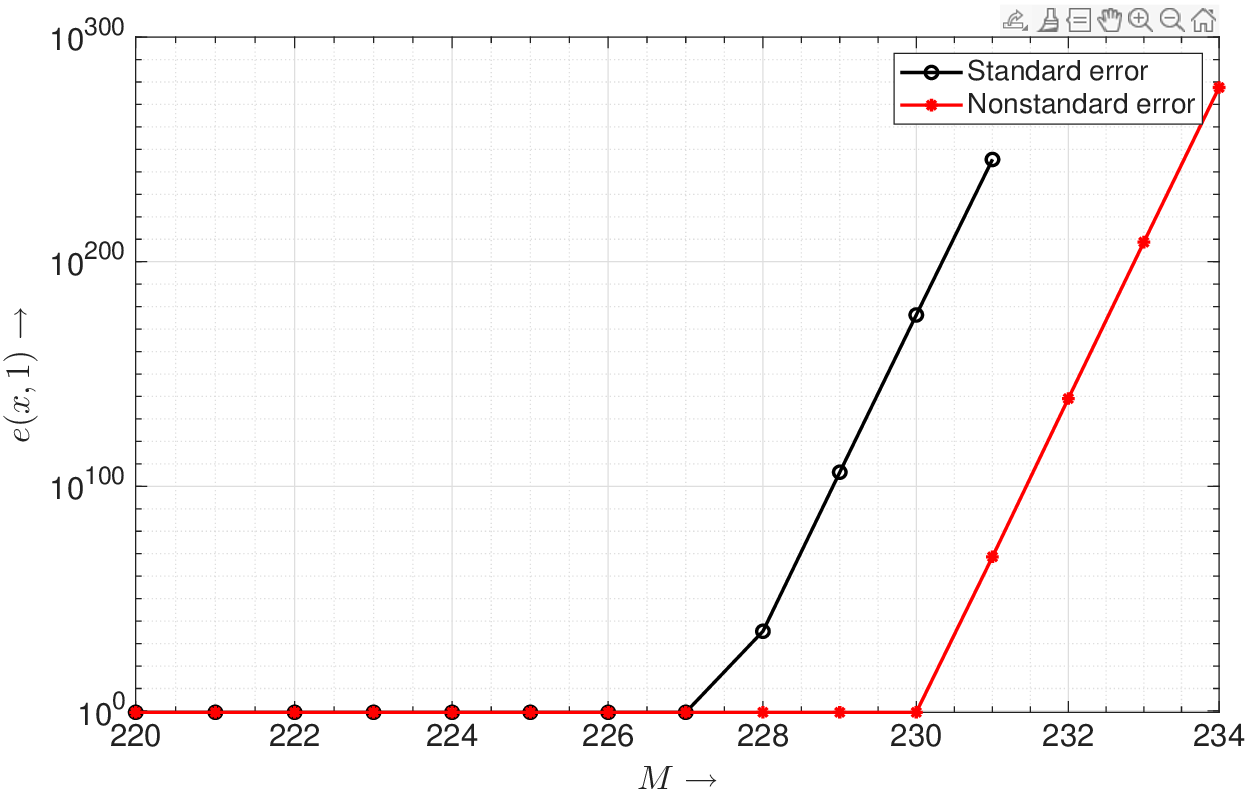}
			\label{subfigure9}} 
       \caption{The (a) linear and (b) Log-log plots for the maximum absolute errors of standard and NSFD approximations at final time $T=1$ and $\alpha=0.9$ against the number of spatial subintervals} 
   \label{fig3}
	\end{center}
\end{figure}
\begin{figure}[ht]
	\begin{center}
		\centering
	\subfigure[$\psi(h)=h^2$]{%
		\includegraphics[scale=0.379]{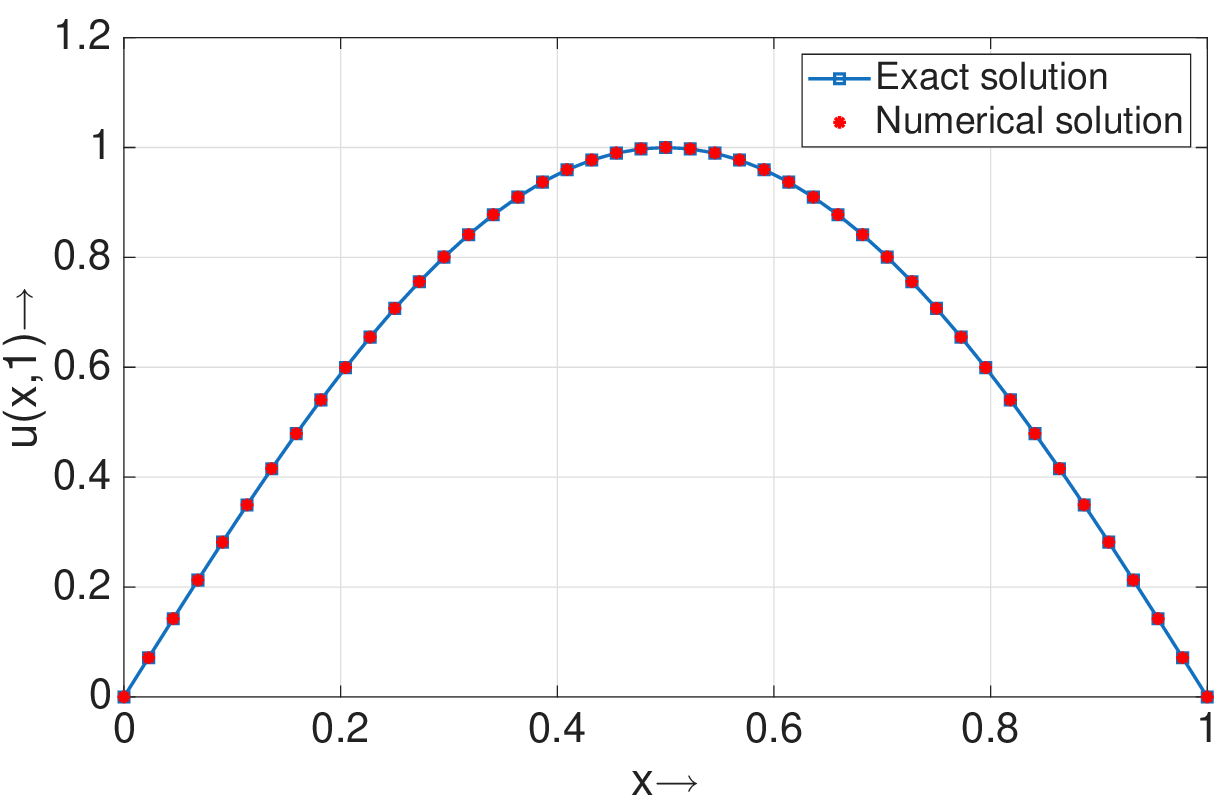}
			\label{subfigure10}}
   \subfigure[$\psi(h)=\sin^2h$]{%
		\includegraphics[scale=0.379]{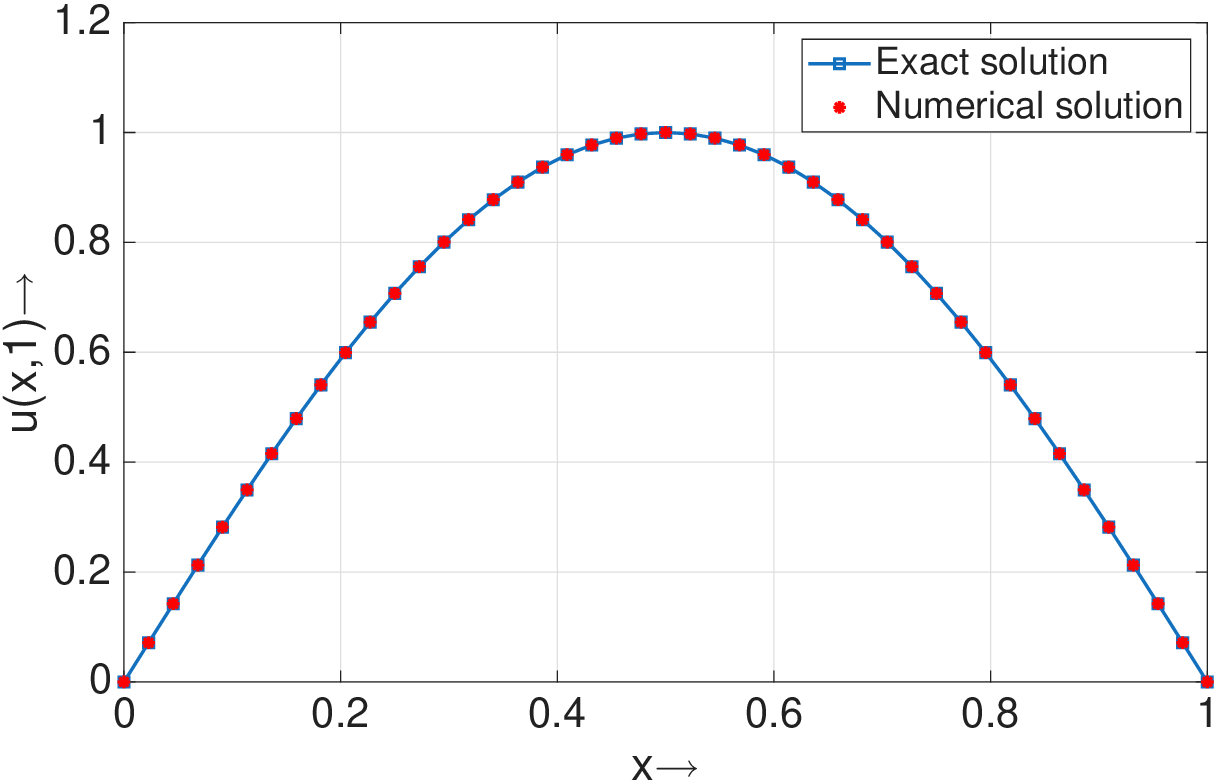}
			\label{subfigure11}} 
            \\
            \subfigure[$\psi(h)=4\sin^2\frac{h}{2}$]{%
		\includegraphics[scale=0.379]{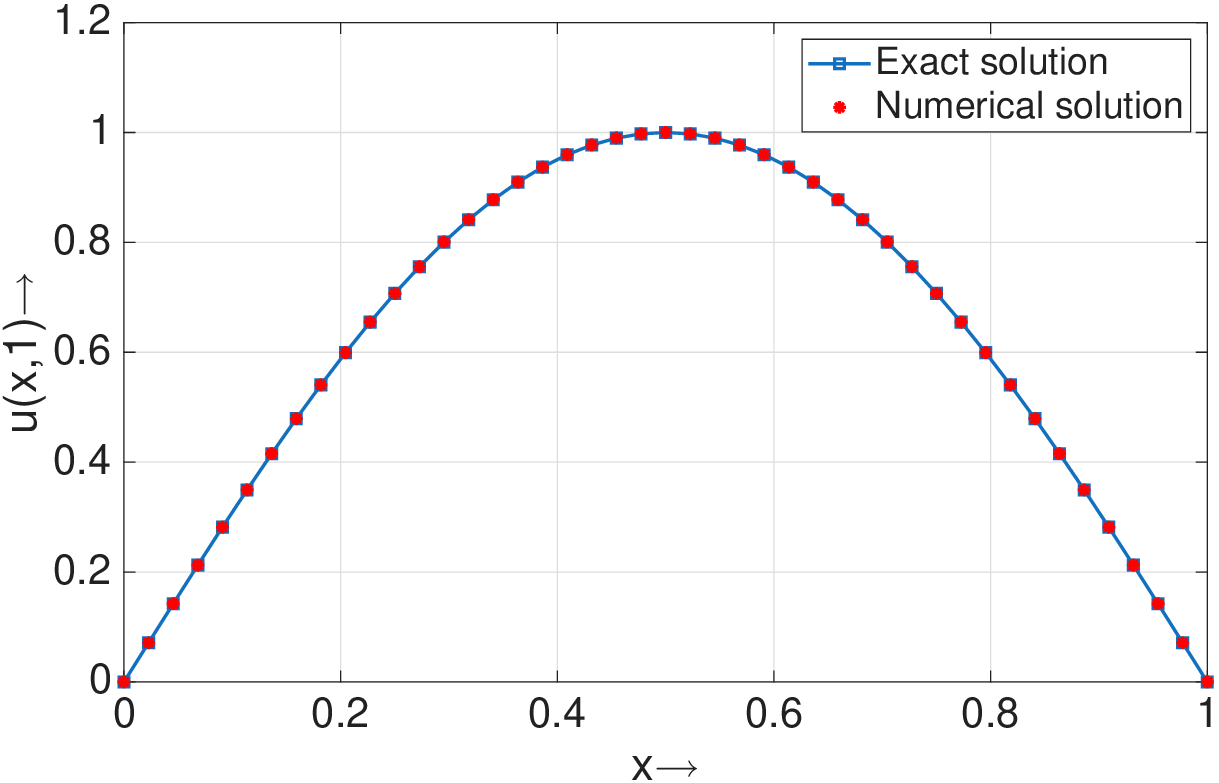}
			\label{subfigure100}}
   \subfigure[$\psi(h)=(100(1-e^\frac{-h}{100}))^2$]{%
		\includegraphics[scale=0.379]{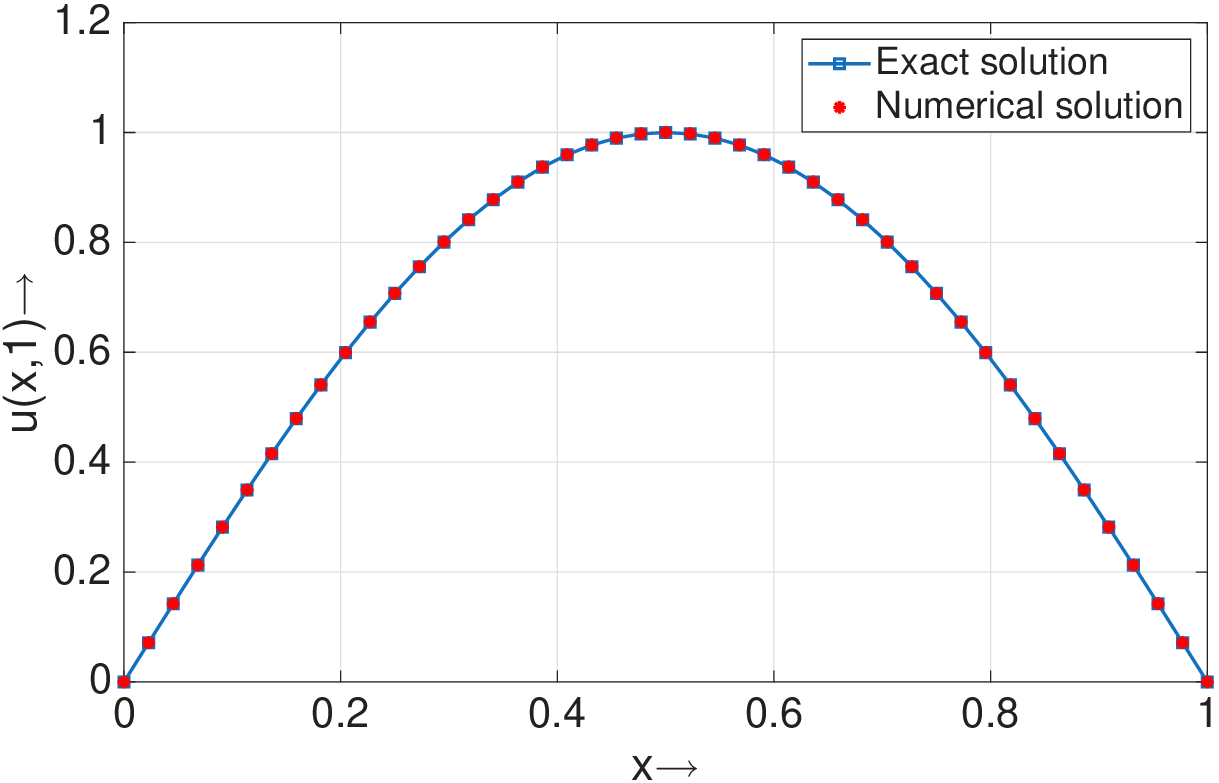}
			\label{subfigure101}} 
       \caption{Comparison of exact and numerical solutions of Example \ref{ex3} at final time $T=1$ with $\alpha=0.9$ and $M=44,\ N=10000$ for SFD and NSFD approximations.} 
   \label{fig4}
	\end{center}
\end{figure}
In an alternate setting, we choose \(L = 5.0848\), $\alpha=0.9$, $N=10000$, and $\phi(\tau)=\tau^\alpha$, such that the proposed scheme \eqref{d1scheme} with standard DF $\psi(h)=h^2$, remains stable up to \(M = 227\) steps. Whereas, the nonstandard DF $\psi(h)=(e^h-1)^2$ extends the stability beyond \(M = 227\).
This behavior is represented in Figure \ref{fig3}, which clearly illustrates the improved stability achieved through the NSFD scheme. Consequently, this confirms the effectiveness of the NSFD scheme in enlarging the stability region relative to its standard counterpart. The curve plots for comparison of the exact analytic solution to the numerical solution using SFD and different NSFD approximations at final time $T=1$ and $\alpha=0.9$ are given in Figure \ref{fig4}.\par
Next example is considered to verify the NSFD scheme \eqref{d1scheme}-\eqref{bdry_diff1} for solving 2D TFDE \eqref{diff_eqn}-\eqref{bdry_eqn}.
\begin{example}\label{ex4}
Consider the 2D TFDE problem \eqref{diff_eqn}-\eqref{bdry_eqn} with following data:
\begin{align*}
    &u^0(x,y)=0,\ x\in[0,1],\ y\in[0,1],\\
    &f(x,y,t)=t^3\sin\pi x\ \sin\pi y\left(\frac{\Gamma\left(4+\alpha\right)}{3!}+2\pi^2 t^\alpha\right),\quad t\in[0,1].
\end{align*}
Here, the domain is $\Omega=[0,1]\times[0,1]\times[0,1]$. It is clear from the above data that the exact solution is given by $u(x,t) = t^{3 + \alpha} \sin(\pi x)\sin(\pi y)$.
\end{example}
Here also, we focus on evaluating the important characteristics such as accuracy, efficiency, and stability of the proposed NSFD scheme \eqref{d2scheme}. These properties are verified with respect to different choices of DFs. Again, the spatial errors are calculated since the spatial convergence dominates the temporal one, and hence, the spatial convergence of the scheme is observed. To this end, a few numerical experiments are conducted considering different test cases by increasing the number of subintervals and changing spatial DFs, but with a fixed fractional order \(\alpha=0.9 \) and $N=10000$. The spatial DFs mentioned in the previous example are also considered in this analysis.
\begin{table}[ht]
\caption{Maximum absolute errors and convergence rate at $T=1$, $\alpha=0.9$, and $N=20000$ with $\phi(\tau)=\tau^\alpha$.}
\label{table:6}
\centering
\begin{tabular}{c c c c c c c c c}
\hline
\multirow{2}{1em}{$M$} & \multicolumn{2}{c}{$\psi(h)=h^2$} & \multicolumn{2}{c}{$\psi(h)=4\sin^2\frac{h}{2}$} & \multicolumn{2}{c}{$\psi(h)=\sin^2h$} & \multicolumn{2}{c}{$\psi(h)=(100(1-e^{-h/100}))^2$} \\
\cline{2-9}
& $E_\infty$ & $T_{rate}$ & $E_\infty$ & $T_{rate}$ & $E_\infty$ & $T_{rate}$ & $E_\infty$ & $T_{rate}$ \\
\hline
$2$   & 1.9100e-01 &        & 1.7080e-01 &        & 1.1140e-01 &        & 1.8610e-01 &        \\
$2^2$ & 4.4500e-02 & 2.1017 & 4.0000e-02 & 2.0942 & 2.6300e-02 & 2.0826 & 4.2300e-02 & 2.1373 \\
$2^3$ & 1.0900e-02 & 2.0295 & 9.8000e-03 & 2.0291 & 6.5000e-03 & 2.0166 & 9.9000e-03 & 2.0952 \\
$2^4$ & 2.7000e-03 & 2.0133 & 2.4000e-03 & 2.0297 & 1.6000e-03 & 2.0224 & 2.2000e-03 & 2.1699 \\
$2^5$ & 6.6126e-04 & 2.0297 & 5.9227e-04 & 2.0187 & 3.8532e-04 & 2.0539 & 3.9637e-04 & 2.4726 \\
\hline
\end{tabular}
\end{table}
\begin{table}[ht]
\caption{Maximum absolute errors and convergence rate at $T=1$, $\alpha=0.9$, and $N=20000$ with $\phi(\tau)=\tau^\alpha$.}
\label{table:7}
\centering
\begin{tabular}{c c c c c c c c c}
\hline
\multirow{2}{1em}{$M$} 
& \multicolumn{2}{c}{$\psi(h)=\frac{4}{\pi^2}\sinh^2(\frac{\pi h}{2})$} 
& \multicolumn{2}{c}{$\psi(h)=\sinh^2h$} 
& \multicolumn{2}{c}{$\psi(h)=(100(e^\frac{h}{100}-1))^2$} 
& \multicolumn{2}{c}{$\psi(h)=\sinh{h^2}$} \\
\cline{2-9} 
& $E_\infty$ & $T_{rate}$ & $E_\infty$ & $T_{rate}$ & $E_\infty$ & $T_{rate}$ & $E_\infty$ & $T_{rate}$\\ 
\hline
$2$   & 3.9920e-01 &        & 2.7340e-01 &        & 1.9580e-01 &        & 2.0110e-01 &        \\ 
$2^2$ & 9.0200e-02 & 2.1459 & 6.2900e-02 & 2.1199 & 4.6700e-02 & 2.0679 & 4.5100e-02 & 2.1567 \\ 
$2^3$ & 2.2000e-02 & 2.0356 & 1.5400e-02 & 2.0301 & 1.2000e-02 & 1.9604 & 1.1000e-02 & 2.0356 \\ 
$2^4$ & 5.4000e-03 & 2.0265 & 3.8000e-03 & 2.0189 & 3.2000e-03 & 1.9069 & 2.7000e-03 & 2.0265 \\ 
$2^5$ & 1.3000e-03 & 2.0544 & 9.3725e-04 & 2.0195 & 9.2623e-04 & 1.7886 & 6.6140e-04 & 2.0294 \\ 
\hline
\end{tabular}
\end{table}
\begin{table}[!h]
\caption{Maximum absolute errors and convergence rate at $T=1$, $\alpha=0.9$, and $N=10000$ with $\phi(\tau)=\left(\frac{1-e^{-100\tau}}{100}\right)^\alpha$.}
\label{table:8}
\centering
\begin{tabular}{c c c c c c c c c}
\hline
\multirow{2}{1em}{$M$} 
& \multicolumn{2}{c}{$\psi(h)=\frac{4}{\pi^2}\sinh^2(\frac{\pi h}{2})$} 
& \multicolumn{2}{c}{$\psi(h)=\sinh^2h$} 
& \multicolumn{2}{c}{$\psi(h)=(100(e^\frac{h}{100}-1))^2$} 
& \multicolumn{2}{c}{$\psi(h)=\sinh{h^2}$} \\
\cline{2-9} 
& $E_\infty$ & $T_{rate}$ & $E_\infty$ & $T_{rate}$ & $E_\infty$ & $T_{rate}$ & $E_\infty$ & $T_{rate}$\\ 
\hline
$2$   & 3.9850e-01 &        & 2.7280e-01 &        & 1.9530e-01 &        & 2.0060e-01 &        \\ 
$2^2$ & 8.9800e-02 & 2.1498 & 6.2500e-02 & 2.1259 & 4.6400e-02 & 2.0735 & 4.4700e-02 & 2.1660 \\ 
$2^3$ & 2.1600e-02 & 2.0557 & 1.5000e-02 & 2.0589 & 1.1600e-02 & 2.0000 & 1.0600e-02 & 2.0762 \\ 
$2^4$ & 5.1000e-03 & 2.0825 & 3.5000e-03 & 2.0995 & 2.9000e-03 & 2.0000 & 2.4000e-03 & 2.1430 \\ 
$2^5$ & 9.9770e-04 & 2.3538 & 5.9292e-04 & 2.5614 & 5.8190e-04 & 2.3172 & 3.1726e-04 & 2.9193 \\ 
\hline
\end{tabular}
\end{table}
In this example, Tables \ref{table:6} and \ref{table:7} report numerical findings corresponding to different choices of nonstandard spatial DFs, with the temporal DF fixed as the standard form $\phi(\tau) = \tau^\alpha$. Table \ref{table:6} includes those spatial DFs that lead to improved accuracy relative to the classical choice $\psi(h) = h^2$. Among them, $\psi(h) = \sin^2(h)$ yields the most accurate solutions, while $\psi(h) = \left(100\left(1 - e^{-h/100}\right)\right)^2$ shows notable improvements in error convergence. Table \ref{table:7} compiles those results for the remaining spatial DFs, which although do not outperform those in Table \ref{table:6} but still deliver reliable accuracy and good convergence rates. These spatial DFs are further reconsidered under a modified temporal DF, $\phi(\tau) = \left(\frac{1-e^{-100\tau}}{100}\right)^\alpha$, as presented in Table \ref{table:8}. The new temporal DF leads to visibly improved performance in terms of both accuracy and convergence behavior. The contour plots for comparison of the exact analytic solution to the numerical solutions using SFD and different NSFD approximations at final time $T=1$ and $\alpha=0.9$ are given in Figure \ref{22}.
\begin{figure}[!ht]
    \begin{center}
	\centering
		\subfigure[exact solution]{
	\includegraphics[width=7.714cm]{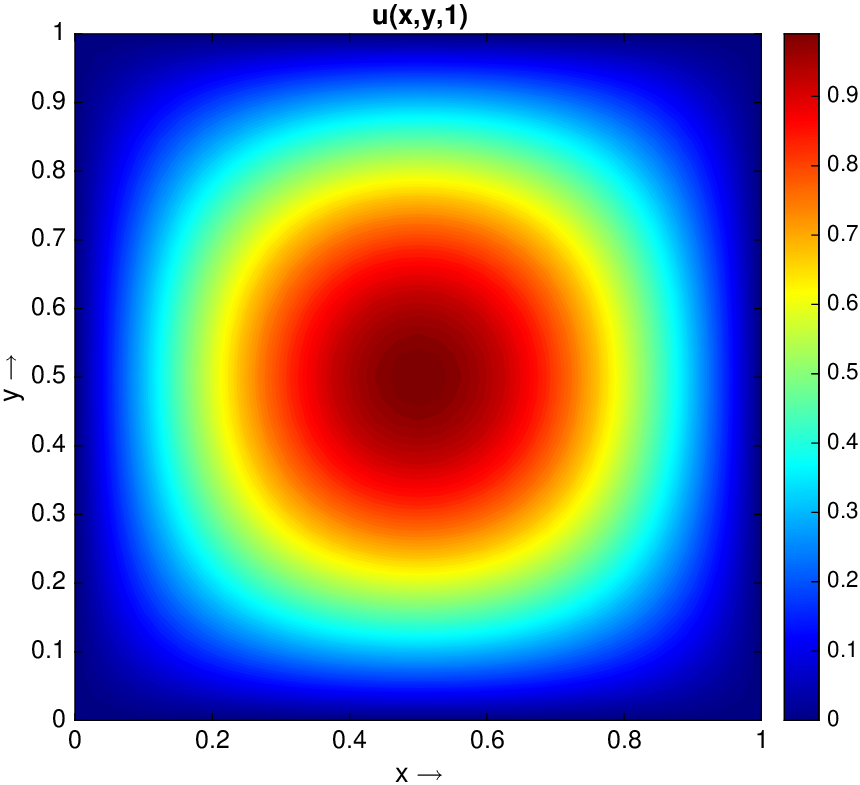}
			\label{subfigure17}} 
		\subfigure[$\psi(h)=h^2$]{
	\includegraphics[width=7.714cm]{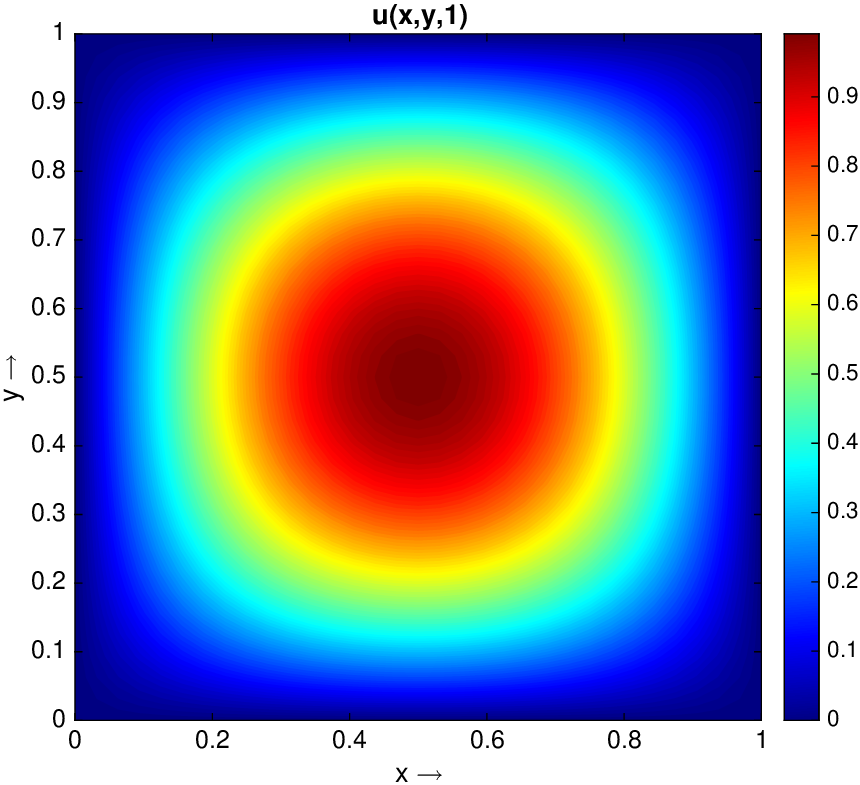}			 \label{subfigure21}} 
    \\
	\centering
		\subfigure[$\psi(h)=\sin^2h$]{
	\includegraphics[width=7.714cm]{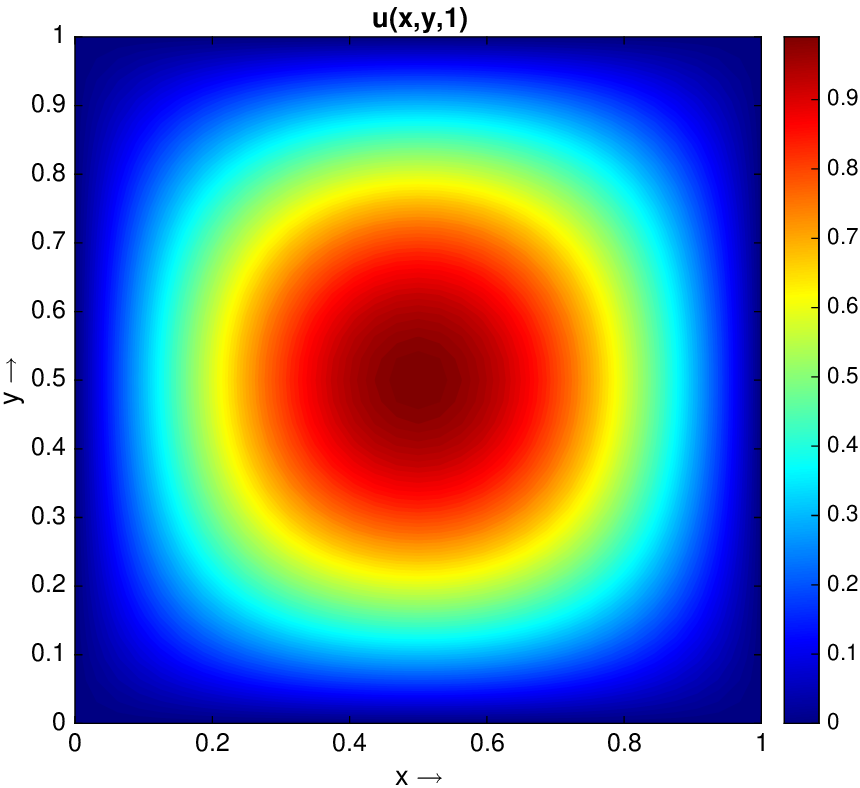}
			\label{subfigure18}} 
		\subfigure[$\psi(h)=(100(1-e^{-h/100}))^2$]{
	\includegraphics[width=7.714cm]{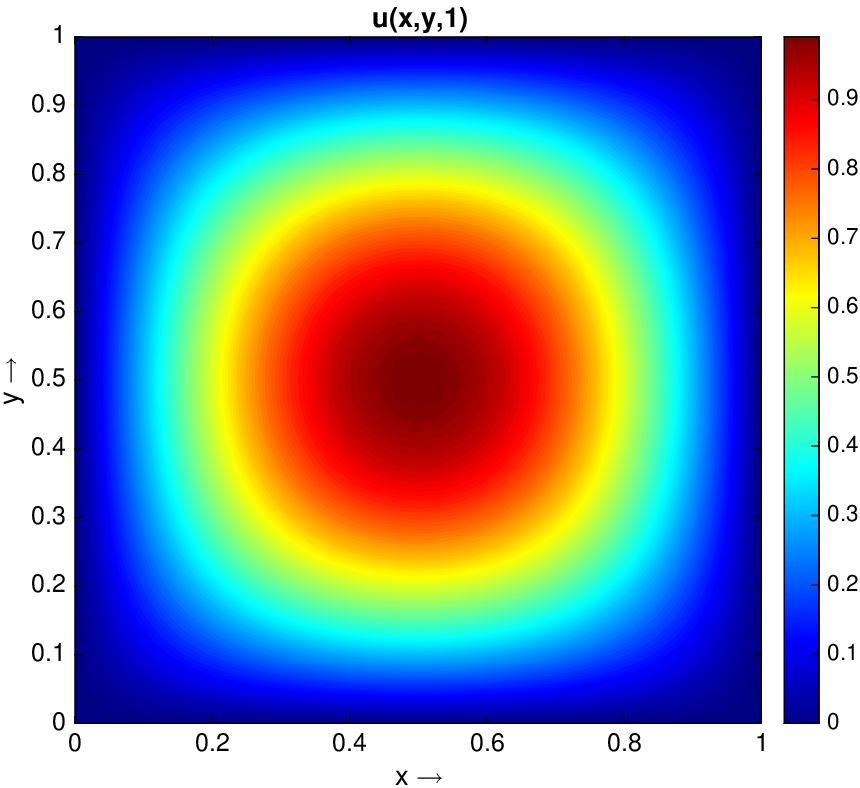}			 \label{subfigure22}} 
    \caption{Contour plots of exact and numerical solutions of Example \ref{ex4} at final time $T=1$ with $\alpha=0.9$ and $M=32,\ N=20000$ for SFD and NSFD approximations.} 
    \label{22}
    \end{center}
\end{figure}
\section{Conclusion}\label{sec7}
This study confirms the applicability and superiority of explicit NSFD methods over SFD schemes for solving one and two-dimensional Caputo-type TFDEs. A nonstandard L1 approximation to the Caputo fractional derivative is first introduced. This formulation is \(A(\alpha)\)-stable and possesses a truncation error of order \(\mathcal{O}(\tau^{2 - \alpha})\). A set of numerical experiments validates this proposed formulation and the associated numerical error. This approximation is then employed to construct explicit NSFD schemes using carefully chosen DFs. The resulting schemes are shown to be stable under the conditions  
\[
\frac{\phi(\tau)}{\psi(h)} \leq \frac{1 - 2^{-\alpha}}{\Gamma(2 - \alpha)} \quad \text{(1D)}, \quad 
\frac{\phi(\tau)}{\psi_1(h)} + \frac{\phi(\tau)}{\psi_2(h)} \leq \frac{1 - 2^{-\alpha}}{\Gamma(2 - \alpha)} \quad \text{(2D)},
\]
where \( \alpha \) is the fractional order. For appropriate choices of DFs, these conditions yield stability regions significantly broader than those of classical SFD schemes. The proposed methods are also shown to be convergent with order \( \mathcal{O}(\tau + h^2) \). A comprehensive set of numerical experiments corroborates the theoretical results, demonstrating that the proposed NSFD schemes offer superior accuracy, stability, and efficiency in the explicit discretization of Caputo-type TFDEs. The graphical representations of the numerical results present the idea of NSFD schemes more clearly. The explicit nature of these methods renders them computationally attractive for large-scale simulations and parallel implementations.
Future work will focus on extending the proposed framework to nonlinear TFDEs, problems with variable coefficients, and systems posed on complex geometries or networks such as metric graphs.

\section*{Statements and Declarations}
\subsection*{\textbf{Funding and acknowledgements}}
The first author acknowledges the support provided by the Council of Scientific and Industrial Research (CSIR), India, for research in this paper under grant number 09/086(1483)/2020-EMR-I.

\subsection*{\textbf{Competing interests}}
The authors declare that there are no financial and non-financial competing interests that are relevant to the content of this article.

\subsection*{\textbf{Data availability}}
Data sharing is not applicable to this article as no datasets were generated or analysed during the current study.

\bibliographystyle{elsarticle-num}
\bibliography{refer}

\end{document}